\newcommand{\ffi}{\varphi}
\newcommand{\N}{\mathbb{N}}
\newcommand{\PP}{\mathbb{P}}
\newcommand{\Z}{\mathbb{Z}} 
\newcommand{\bb}[1]{\mathbb{#1}} 
\newcommand{\Cc}{\mathcal{C}}
\newcommand{\Ic}{\mathcal{I}}
\DeclareMathOperator{\rk}{rk}
\DeclareMathOperator{\seg}{Seg}
\DeclareMathOperator{\spn}{Span}
\DeclareMathOperator{\cl}{Cl}
\DeclareMathOperator{\supp}{Supp}
\DeclareMathOperator{\reg}{reg}
\newtheorem{theorem}{Theorem}[section]
\newtheorem{proposition}[theorem]{Proposition}
\newtheorem{lemma}[theorem]{Lemma}
\newtheorem{corollary}[theorem]{Corollary}
\theoremstyle{definition}
\newtheorem{remark}[theorem]{Remark}
\newtheorem{define}[theorem]{Definition}
\newtheorem{example}[theorem]{Example}
\numberwithin{equation}{section}
\begin{document}

\title{Segre's Regularity  Bound for Fat Point Schemes}

\author{Uwe Nagel}
\address{Department of Mathematics\\
University of Kentucky\\
715 Patterson Office Tower\\
Lexington, KY 40506-0027 USA}
\email{uwe.nagel@uky.edu}

\author{Bill Trok}
\address{Department of Mathematics\\
University of Kentucky\\
715 Patterson Office Tower\\
Lexington, KY 40506-0027 USA}
\email{william.trok@uky.edu}

\thanks{The first author was partially supported by Simons Foundation grant \#317096.}

\begin{abstract}
Motivated by questions in interpolation theory and on linear systems of rational varieties, one is interested in upper bounds for the Castelnuovo-Mumford regularity of arbitrary subschemes of fat points. An optimal upper bound, named after Segre, was conjectured by Trung and, independently, by Fattabi and Lorenzini. It is shown that this conjecture is true. Furthermore, an alternate regularity bound is established that improves the Segre bound in some cases. Among the arguments is a new partition result for matroids.  
\end{abstract} 


\maketitle

\section{Introduction} 

Given $s$ distinct points $P_1,\ldots,P_s$ of projective space and positive integers $m_1,\ldots,m_s$, we consider homogeneous polynomials that vanish at $P_i$ to order $m_i$ for  $i = 1,\ldots,s$. Equivalently, these are the polynomials such that  $P_i$ is a root of all partial derivatives of order less than $m_i$ for all $i$. The set of all these polynomials is the homogeneous $I_X$ of the fat point scheme $X = \sum_{i=1}^s m_i P_i$. The vector space dimension of the degree $d$ polynomials in $I_X$ is known if $d$ is large. In geometric language, the fat points scheme $X$ imposes independent conditions on forms of degree $d \gg 0$. The least integer $d$ such that this is true for degree $d$ forms  is called the \emph{regularity index} of $X$, denoted $r (X)$. It was  conjectured by Trung (see \cite{T-00}) and, independently, by Fatabbi and Lorenzini in \cite{FL}  that $r(X) \le \seg X$, where $\seg X$ is 
\[
\seg X  := \max\left\{ \left\lceil\frac{ - 1 + \sum_{P_i \in L} m_i}{\dim L} \right\rceil \mid L 
  \subseteq \bb{P}^n \text{ a positive-dimensional linear subspace}\right\}. 
\]

The number $\seg X$ (see also Remark \ref{rem:equiv fraction}) is called the \emph{Segre bound}  because B.~Segre \cite{S} proved the conjecture in the case where the given points are in a projective plane and no three of them are collinear. Segre's result was extended to $\PP^n$ under the assumption that  the given points $P_1,\ldots,P_s \in \PP^n$ are in linearly general position, that is, any subset of $n+1$ of these points spans $\PP^n$ (see \cite{CTV}). Without this assumption, the conjecture has been shown in rather few cases, namely  
\begin{itemize} 
\item for any fat point subscheme of $\PP^2$ in  \cite{F} and \cite{T-99},  independently,
\item  for any fat point subscheme of $\PP^3$ in \cite{FL} and \cite{T-00},  independently,  \; and 
\item if  $s \le n+3$  and the $s$ points span  $\PP^n$ in \cite{BDP}. 
\end{itemize}
Furthermore, there are partial results for certain fat point subschemes of $\PP^4$ (see \cite{B1, B2}) and for some fat point subschemes of $\PP^n$ supported at at most $2n-1$ points (see \cite{CFL}). In this paper we establish the conjecture in full generality, that is, we show $r(X) \le \seg X$ for each fat point subscheme $X$ of some projective space. This bound cannot be improved in general (see Corollary \ref{cor:sharpness}). 

Bounding the regularity index of a fat point scheme $X$ is equivalent to bounding its Castelnuovo-Mumford regularity
\[
\reg (X) = \min \{m \in \Z \; | \; H^1 (\PP^n, \Ic_X (m-1)) = 0\},  
\]
where $\Ic_X$ is the ideal sheaf of $X$, because $r (X) =  \reg (X) - 1$ (see, e.g., Lemma \ref{lem:reg subscheme}).  
Thus,  by  \cite[Theorem 4.1]{Ei} our results have consequences for interpolation problems. 

If the points $P_1,\ldots,P_s$ are generic, then one expects better bounds for the regularity index. Indeed, for generic points a naive dimension count suggests the precise value of the regularity index. In \cite{AH-00},  Alexander and Hirschowitz showed that this naive count is correct in sufficiently large degrees. Moreover, if all points have multiplicity two they completely classified the exceptions  in \cite{AH-95}. In all other cases, similarly complete results are not known. In contrast, the Segre bound is true for any fat point scheme. Moreover, we establish an alternate regularity bound  (see Proposition~\ref{prop:modified Segre bound}) that improves Segre's bound considerably in some cases. In particular, this is true if many of the points in the support are generic (see Example~\ref{exa:mod bound}). 
 
Let us briefly describe the organization of this paper. In Section 2, a crucial new result on matroid partitions is established. Section 3 discusses refinements of a classical tool, the use of residual 
subschemes. Both sets of techniques are first combined in order to establish Segre's bound for reduced zero-dimensional schemes. This is carried out  in Section 4. The arguments in the case of arbitrary fat point schemes are considerably more involved. This is the subject of Section 5. There,  also the optimality and a modification of the Segre bound are discussed. 


\section{Matroid Partitions} 

The goal of this section is to establish a result on matroids that will be a key ingredient for our results on the regularity of a fat point scheme. In order to make the paper accessible to a wide audience we recall some basic facts on matroid. For details we refer to \cite{O}. 

A \emph{matroid} $M$ on a finite \emph{ground set} $E$ is a family of subsets of $E$, called \emph{independent sets}, that is closed under inclusion, that is, any subset of an independent set is independent, and has the additional property that all maximal independent subsets of any subset $A \subseteq E$ have the same cardinality. This maximum cardinality is called the \emph{rank} of $M$, denoted $\rk (M)$. More generally, the \emph{rank} of any subset $A$ of $E$ is the maximum cardinality of an independent subset of $A$. It is denoted by  $\rk_M (A)$ or simply $\rk (A)$ if the matroid $M$ is understood. Equivalently, a matroid on $E$ can be described by means of a function $r_M: 2^E \to \N_0$, which has the following three properties: (i) $0 \le r_M (A) \le |E|$ for all $A$; (ii) $r_M (A) \le r_M (B)$ if $A \subseteq B$; and (iii) $r_M (A \cap B) + r_M (A \cup B) \le r_M (A) + r_M (B)$ for all $A, B \subseteq E$. Then the subsets $I$ of $E$ with $r_M (I) = |I|$ are the independent subsets of a matroid $M$ and $r_M$ is called the 
\emph{rank function}  of $M$. 

The \emph{closure} or \emph{span} of a subset $A \subseteq E$ is the set 
\[
\cl_M (A) = \{e \in E \: | \: \rk (A+e) = \rk (A)\},  
\]
where we use the simplified notation $A + e = A \cup \{e\}$. Similarly, we write $C - e$ for $C \setminus \{e\}$. 

We will discuss partitions of a ground set into independent sets. The following characterization is due to Edmonds and  Fulkerson \cite[Theorem 1c]{EF}. 
 
\begin{theorem}
    \label{thm:ext edmonds}
Given  matroids $M_1,\ldots,M_k$ on a ground set $E$ with rank functions $\rk_1,\ldots,\rk_k$, there is a partition  $E = I_1 \sqcup \cdots \sqcup I_k$ such that each set $I_j$ is independent in  $M_j$   if and only if, for each subset $A \subseteq
  E$, one has $|A| \le \sum_{j=1}^k  \rk_j (A)$.
\end{theorem}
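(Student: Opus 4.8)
The plan is to prove the two implications separately, the forward direction being immediate and the converse going by induction on $|E|$. For necessity, if $E = I_1 \sqcup \cdots \sqcup I_k$ with each $I_j$ independent in $M_j$, then for any $A \subseteq E$ the sets $A \cap I_j$ partition $A$ and each is independent in $M_j$, so $|A| = \sum_j |A \cap I_j| = \sum_j \rk_j(A \cap I_j) \le \sum_j \rk_j(A)$.

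For sufficiency, assume $|A| \le \sum_j \rk_j(A)$ for all $A \subseteq E$; I abbreviate this hypothesis by $(\star)$ and induct on $|E|$, the base case $E = \emptyset$ being trivial. Call a nonempty $A \subseteq E$ \emph{tight} if equality holds in $(\star)$, and split into two cases. Suppose first that there is a tight set $A$ with $A \subsetneq E$. Then $(\star)$ holds for the restricted matroids $M_j|_A$ (ranks are unchanged by restriction), so by induction $A = \bigsqcup_j J_j$ with $J_j$ independent in $M_j|_A$; since $\sum_j |J_j| = |A| = \sum_j \rk_j(A)$ while $|J_j| \le \rk_j(A)$ for each $j$, we get $|J_j| = \rk_j(A)$, i.e.\ $J_j$ spans $A$ in $M_j$. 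Moreover, for $C \subseteq E \setminus A$ one has $\sum_j \rk_{M_j/A}(C) = \sum_j \big(\rk_j(C \cup A) - \rk_j(A)\big) \ge |C \cup A| - |A| = |C|$, using $(\star)$ on $C \cup A$ and tightness of $A$; so by induction $E \setminus A = \bigsqcup_j K_j$ with $K_j$ independent in the contraction $M_j / A$. Setting $I_j := J_j \sqcup K_j$ gives a partition of $E$, and each $I_j$ is independent in $M_j$ because $\cl_{M_j}(J_j) = \cl_{M_j}(A)$ forces $\rk_j(I_j) = \rk_j(A \cup K_j) = \rk_j(A) + |K_j| = |J_j| + |K_j|$.

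Now suppose no tight proper subset exists. Pick any $e \in E$; applying $(\star)$ to $\{e\}$ shows $\rk_{j_0}(\{e\}) = 1$ for some $j_0$, i.e.\ $e$ is a non-loop of $M_{j_0}$. I claim $(\star)$ persists on $E - e$ for the matroids $M_j$ ($j \ne j_0$) together with the contraction $M_{j_0}/e$: were it to fail for some $B \subseteq E - e$, then (using $\rk_{M_{j_0}/e}(B) = \rk_{j_0}(B + e) - 1$) the failing inequality combined with $(\star)$ for $B$ would force $\rk_{j_0}(B+e) = \rk_{j_0}(B)$ and $\sum_j \rk_j(B) = |B|$; since $B$ is nonempty (as $e$ is a non-loop of $M_{j_0}$) and $B \subsetneq E$, this contradicts the case assumption. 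Hence the induction hypothesis applies and yields $E - e = \bigsqcup_j I'_j$; then $I_{j_0} := I'_{j_0} + e$ together with $I_j := I'_j$ for $j \ne j_0$ is the required partition, since $I'_{j_0}$ independent in $M_{j_0}/e$ means $\rk_{j_0}(I'_{j_0} + e) = |I'_{j_0}| + 1$.

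The step I expect to be the crux is choosing this case split. A naive induction that simply deletes one element can destroy $(\star)$, and tight proper subsets are precisely the obstruction: when one is present we reduce instead by cutting the ground set along it — the key point being that on a tight set the independent pieces are forced to be bases, which is what makes regluing preserve independence — and when none is present deletion is safe by the argument above. The remaining verifications (that restriction and contraction preserve $(\star)$, and that the reassembled sets are independent) are routine bookkeeping with submodularity and the closure operator. An alternative route would be to deduce the statement from the matroid union theorem, whose rank formula $\rk_{M_1 \vee \cdots \vee M_k}(E) = \min_{Y \subseteq E}\big(|E \setminus Y| + \sum_j \rk_j(Y)\big)$ gives the equivalence directly once one knows the union is a matroid.
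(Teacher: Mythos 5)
Your proof is correct. Note, however, that the paper offers no proof of this statement to compare against: it is quoted verbatim from Edmonds--Fulkerson \cite[Theorem 1c]{EF} and used as a black box. So the relevant assessment is of your argument on its own terms, and it holds up. The necessity direction is the standard counting argument. For sufficiency, your case split on the existence of a tight proper nonempty subset is exactly the right mechanism: when such an $A$ exists, the induction on $M_j|_A$ forces each piece $J_j$ to be a basis of $A$ in $M_j$ (since $\sum_j|J_j|=|A|=\sum_j\rk_j(A)$), which is what makes $\cl_{M_j}(J_j)=\cl_{M_j}(A)$ and hence makes the reglued sets $J_j\sqcup K_j$ independent; and the verification that the contractions $M_j/A$ inherit the counting condition on $E\setminus A$ is correct. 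When no tight proper subset exists, your computation showing that a failure of the condition for $\{M_j\}_{j\ne j_0}\cup\{M_{j_0}/e\}$ on some $B\subseteq E-e$ would force $B$ to be nonempty, proper, and tight is also correct (the nonemptiness check via $\rk_{j_0}(\{e\})=1$ is the one place a careless version would break, and you handle it). This is essentially the classical inductive proof of the matroid partition theorem, equivalent to deriving it from the matroid union rank formula as you note at the end; either route is a legitimate replacement for the citation.
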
  

If all matroids  are equal, one obtains the following earlier criterion by Edmonds \cite{E}.

\begin{corollary}
    \label{cor:edmonds}
Given a matroid, there is a partition of its ground set
  $E$ into $k$ independent sets if and only if, for each subset $A \subseteq
  E$, one has $|A| \le \rk(A) \cdot k$.
\end{corollary}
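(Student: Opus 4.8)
The plan is to derive this as an immediate special case of Theorem \ref{thm:ext edmonds}. Take all the matroids to be equal: set $M_1 = M_2 = \cdots = M_k = M$, where $M$ is the given matroid with rank function $\rk$. Then a partition $E = I_1 \sqcup \cdots \sqcup I_k$ with each $I_j$ independent in $M_j$ is exactly a partition of $E$ into $k$ independent sets of $M$. By Theorem \ref{thm:ext edmonds}, such a partition exists if and only if for every subset $A \subseteq E$ one has $|A| \le \sum_{j=1}^k \rk_j(A)$. Since $\rk_j = \rk$ for all $j$, the right-hand side is simply $\sum_{j=1}^k \rk(A) = k \cdot \rk(A)$, which is the stated condition. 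This proves both implications at once.

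The only point requiring a word of care is the degenerate reading of ``partition'': one should allow the blocks $I_j$ to be empty, so that ``partition into $k$ independent sets'' means an ordered $k$-tuple $(I_1, \ldots, I_k)$ of pairwise disjoint independent sets whose union is $E$ (some possibly empty). This is consistent with the formulation of Theorem \ref{thm:ext edmonds}, where nothing forces the $I_j$ to be nonempty, so no extra argument is needed. There is essentially no obstacle here; the corollary is a direct substitution into the theorem already quoted.
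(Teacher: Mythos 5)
Your proposal is correct and matches the paper exactly: the paper derives Corollary \ref{cor:edmonds} from Theorem \ref{thm:ext edmonds} precisely by taking $M_1 = \cdots = M_k = M$, so that $\sum_{j=1}^k \rk_j(A) = k \cdot \rk(A)$. Your remark about allowing empty blocks is a reasonable clarification but not something the paper dwells on.
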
 

Strengthening the assumption, one can find a partition with additional properties.

\begin{theorem} 
     \label{thm:main matroid}
Let $\tilde{M}$ be a matroid on $\tilde{E} \neq \emptyset$, and let $k$ and $p$  be non-negative integers. Assume there is a subset $E \neq \emptyset$ of $\tilde{E}$ such that  
\begin{equation*}
     \label{eq: mod card estimate}
|A| \le k \cdot \rk_{\tilde{M}} A - p
\end{equation*}  
for each non-empty subset $A \subseteq E$, and fix an integer $q$ with $0 \le q \le p$.  Then, for each $q$-tuple $(e_1,\ldots,e_q) \in \tilde{E}^q$, there are disjoint independent sets $\tilde{I}_1,\ldots,\tilde{I}_q$ of $E$  with the following  property: If $(a_1,\ldots,a_p) \in \tilde{E}^p$  is a $p$-tuple whose first $q$ entries are $e_1,\ldots,e_q$, that is, $a_i = e_i$ if $1 \le i \le q$, then there is a partition $E = I_1 \sqcup \cdots \sqcup I_k$   into independent sets  such that $a_j \notin \cl (I_j)$ whenever $1 \le j \le p$ and $I_j = \tilde{I}_j$ for $j = 1\ldots,q$. 
\end{theorem}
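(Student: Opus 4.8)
The plan is to prove the theorem by induction on $q$, handling the base case $q=0$ directly via the Edmonds--Fulkerson criterion and reducing the inductive step to it. For $q=0$ the statement says that for every $(a_1,\dots,a_p)\in\tilde E^p$ there is a partition $E=I_1\sqcup\cdots\sqcup I_k$ into $\tilde M$-independent sets with $a_j\notin\cl(I_j)$ for $j\le p$ (there being no sets $\tilde I_j$ to produce). I would apply Theorem~\ref{thm:ext edmonds} to the matroids $M_1,\dots,M_k$ on $E$ with $M_j=\tilde M|_E$ for $j>p$ and, for $j\le p$, $M_j$ the matroid whose independent sets are the $\tilde M$-independent $I\subseteq E$ with $a_j\notin\cl_{\tilde M}(I)$; this is a matroid with $\rk_{M_j}(A)=\rk_{\tilde M}(A\cup\{a_j\})-1$ provided $a_j$ is not a loop, and since the hypothesis forces $E$ to be loopless while a loop of $\tilde E$ lies in every closure, we may and do assume $\tilde M$ is loopless. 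The inequality $|A|\le\sum_j\rk_{M_j}(A)$ demanded by Theorem~\ref{thm:ext edmonds} then reduces, for $A\neq\varnothing$, to the hypothesis $|A|\le k\,\rk_{\tilde M}(A)-p$ and is trivial for $A=\varnothing$, and a partition adapted to the $M_j$'s is one of the desired form.

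For $q\ge1$ the inductive step rests on a \emph{residual reduction}: given the hypothesis for $(\tilde M,E,k,p)$ and any $e_1\in\tilde E$, I claim there is a $\tilde M$-independent set $\tilde I_1\subseteq E$ with $e_1\notin\cl(\tilde I_1)$ such that $(\tilde M,\,E\setminus\tilde I_1,\,k-1,\,p-1)$ again satisfies the hypothesis. Granting this, apply the theorem for $q-1$ to $(\tilde M,E\setminus\tilde I_1,k-1,p-1)$ and the tuple $(e_2,\dots,e_q)$ to get disjoint independent $\tilde I_2,\dots,\tilde I_q\subseteq E\setminus\tilde I_1$; then $\tilde I_1,\dots,\tilde I_q$ are disjoint and independent, and for a completion $(a_1,\dots,a_p)$ one takes $I_1=\tilde I_1$ (which avoids $a_1=e_1$ in its closure) together with the partition of $E\setminus\tilde I_1$ given by the case $q-1$, which avoids $a_2,\dots,a_p$ and agrees with $\tilde I_j$ in positions $2\le j\le q$. (The hypothesis forces $p\le k-1$, hence $q\le k-1$, so the indices are in range; the case $E\setminus\tilde I_1=\varnothing$ is treated by hand, and for $p=0$ the residual reduction is vacuous.)

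The crux --- and the step I expect to be hardest --- is the residual reduction. Examining it flat by flat, the bound $|A|\le(k-1)\rk_{\tilde M}(A)-(p-1)$ for $A\subseteq E\setminus\tilde I_1$ can fail only at flats $F=\cl_{\tilde M}(A)$ that are nearly tight for the original bound, i.e.\ with $|E\cap F|>(k-1)\rk F-(p-1)$, and there it amounts to $\tilde I_1$ meeting $F$ in a set of rank at least $|E\cap F|-(k-1)\rk F+(p-1)$, a number between $1$ and $\rk F-1$. So I need one $\tilde M$-independent set $\tilde I_1$ that avoids $e_1$ in its closure and is ``almost spanning'' (rank $\ge\rk F-1$) in every nearly-tight flat $F$ at once. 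I would build it by adjoining $e_1$ to $E$ and, by a refinement of the partition arguments above, partitioning $E\cup\{e_1\}$ into $k$ independent sets with $e_1$ forced into one distinguished block $J_1$ and each of the remaining $k-1$ blocks made to fall one dimension short of spanning every nearly-tight flat --- either via a carefully chosen auxiliary matroid system for Theorem~\ref{thm:ext edmonds}, or by starting from an arbitrary such partition and running an exchange/augmentation argument that pushes enough elements of the nearly-tight flats into $J_1$; a counting argument (using $p\le k-1$) then forces $J_1$ to absorb the missing dimensions, and $\tilde I_1:=J_1\setminus\{e_1\}$ works. The real difficulty is making this simultaneous over all nearly-tight flats, which may be nested or cross one another, so that the collection of demands on $\tilde I_1$ need not be obviously realizable by a single independent set; organizing that bookkeeping is where the bulk of the argument goes.
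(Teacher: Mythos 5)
Your overall architecture is the same as the paper's: peel off one independent set at a time via a ``residual reduction'' (the paper's Corollary~\ref{cor:inductive step}), note that the $j$-th set depends only on $a_1,\ldots,a_j$, and finish the remaining ground set with a partition into independent sets. Your base case $q=0$ is in fact a clean, correct variant: applying Theorem~\ref{thm:ext edmonds} to $p$ elementary quotients (rank $\rk_{\tilde M}(A\cup\{a_j\})-1$) together with $k-p$ copies of the restriction gives $\sum_j\rk_{M_j}(A)\ge k\,\rk(A)-p\ge|A|$ directly, and your observation that the quotient rank formula is a valid matroid rank function whenever $a_j$ is a non-loop (whether or not $a_j\in E$) replaces the paper's parallel-extension device. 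None of this is in dispute.

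The genuine gap is the residual reduction itself, which you correctly identify as the crux and then do not prove. You reduce it to finding a single independent set $\tilde I_1$ with $e_1\notin\cl(\tilde I_1)$ that is almost spanning in every nearly-tight flat simultaneously, observe that these flats may cross, and say that ``organizing that bookkeeping is where the bulk of the argument goes'' --- but that bookkeeping \emph{is} the theorem's hard content, and neither of your two suggested routes is carried out. The paper resolves it as follows: introduce the auxiliary matroid $M(f)$ whose circuits are the minimal sets violating $|C|\le (k-1)\rk(C)-(p-1)$ (so a set is independent in $M(f)$ exactly when the residual bound holds on all its subsets), prove via Lemma~\ref{lem:prop circuit} and an uncrossing argument that $\rk_{M(f)}(E)\ge|E|-\rk(E)+1$ (Proposition~\ref{prop:key rank est}), and then apply Theorem~\ref{thm:ext edmonds} to the pair $\left(M/e_1,\,M(f)\right)$, using $|A|\le\rk(A)+\rk_{M(f)}(A)-1\le\rk_{M/e_1}(A)+\rk_{M(f)}(A)$. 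The uncrossing is exactly the step you flag as problematic: when two tight sets $A_i,A_j$ have non-additive ranks, submodularity applied to $A_i\cap A_j$ (which is independent in $M(f)$) shows that $I\cap\cl(A_i\cup A_j)$ is again tight, so the family of tight sets can be replaced by a rank-disjoint one, after which a counting argument yields a contradiction. Without this (or an equivalent argument), your induction has no engine, so the proposal as written does not constitute a proof.
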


Note that in the case where $\tilde{E} = E$ and $p = 0$ this is just Corollary \ref{cor:edmonds}. 
The proof of Theorem~\ref{thm:main matroid} requires some preparation. 
Recall that matroids can also be characterized by their circuits. A \emph{circuit} of a matroid $M$ on $E$ is a minimal dependent subset $C \subseteq E$, that is, $C$ is dependent, but every proper subset of $C$ is independent. 

Fix integers $k, p$ with $k > p \ge 0$ and consider the function $f: 2^E \to \Z$ defined by 
\[
f (A) = k \cdot \rk (A) - p. 
\]
Moreover, let 
\[
\Cc (f) = \{ C \subseteq E \; | \; \emptyset \neq C \text{ is minimal with } f(C) < |C|\}. 
\]
By \cite[Proposition 12.1.1]{O}, there is a matroid on $E$ whose circuits are precisely the elements of $\Cc (f)$. We denote this matroid by $M_{k,p}$ or $M(f)$. Thus, a non-empty subset $J \subseteq E$ is independent in $M(f)$ if and only if $|J| \le f (J)$. We also need the following observation. 

\begin{lemma}
    \label{lem:prop circuit}
If $C$ is a circuit of $M(f)$ and $e \in C$, then $e \in \cl_M (C)$ and $|C| = k \cdot \rk_M (C) - p +1$.     
\end{lemma}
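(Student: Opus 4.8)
The plan is to unwind the definition of a circuit of $M(f)$ and play off two cardinality estimates against each other; no input is needed beyond monotonicity of $\rk_M$ and the integrality of the quantities involved. Recall that $C$ being a circuit of $M(f)$ means that $C$ is non-empty, that $|C| > f(C) = k \cdot \rk_M(C) - p$, and that $C$ is minimal with this property, so every proper subset of $C$ is independent in $M(f)$; in particular, for a non-empty proper subset $J \subsetneq C$ one has $|J| \le f(J) = k\cdot\rk_M(J) - p$.

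First I would record the lower bound: since $|C| > k \cdot \rk_M(C) - p$ and both sides are integers, $|C| \ge k \cdot \rk_M(C) - p + 1$. For the matching upper bound, fix $e \in C$. If $|C| = 1$, say $C = \{e\}$, then $1 > k \cdot \rk_M(\{e\}) - p$ together with $k > p \ge 0$ (hence $k \ge 1$) forces $\rk_M(\{e\}) = 0$, i.e. $e$ is a loop of $M$; in every situation where the lemma is invoked $M$ has no loops, so we may and do assume $|C| \ge 2$. Then $C - e$ is a non-empty proper subset of the circuit $C$, hence independent in $M(f)$, and therefore
\[
|C| - 1 = |C - e| \le f(C - e) = k \cdot \rk_M(C - e) - p \le k \cdot \rk_M(C) - p,
\]
the last step by monotonicity of $\rk_M$. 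Thus $|C| \le k \cdot \rk_M(C) - p + 1$, which combined with the lower bound yields $|C| = k \cdot \rk_M(C) - p + 1$.

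Finally, substituting this equality back into the displayed chain forces every inequality there to be an equality; in particular $\rk_M(C - e) = \rk_M(C)$, that is, $\rk_M((C - e) + e) = \rk_M(C - e)$, so $e \in \cl_M(C - e)$, and hence a fortiori $e \in \cl_M(C)$. Since $e \in C$ was arbitrary, both conclusions of the lemma follow. The argument is entirely routine; the only point requiring a little care is the order of operations — one must first derive the cardinality formula and only afterwards reuse the inequality chain to extract the rank equality — together with the harmless bookkeeping around the singleton/loop case.
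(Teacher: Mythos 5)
Your proof is correct and follows essentially the same route as the paper, which compresses the whole argument into the single chain $|C| > f(C) \ge f(C-e) \ge |C-e| = |C|-1$ and then forces equality throughout to get both the cardinality formula and $\rk_M(C) = \rk_M(C-e)$. Your extra bookkeeping around the singleton/loop case (which the paper silently skips) and your remark that the substantive conclusion is really $e \in \cl_M(C-e)$ — the form in which the lemma is actually invoked in Proposition \ref{prop:key rank est} — are harmless refinements rather than a different approach.
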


\begin{proof}
Since $C - e$ is independent we obtain 
\[
|C| >  f(C) \ge f (C-e) \ge |C-e| = |C| -1, 
\]
which forces $|C| - 1 = f (C-e) = f (C) = k \cdot \rk_M (C) - p$, and thus $\rk_M (C) = \rk_M (C-e)$. 
\end{proof}

To simplify notation we will simply write $\rk (A)$ and $\cl (A)$ if these concepts refer to the original matroid $M$. We are ready to  establish a key result. 

\begin{proposition}
    \label{prop:key rank est}
Let $M$ be a matroid on $E \neq \emptyset$, and let $k$ and $p$  be non-negative integers. Assume that 
\begin{equation}
     \label{eq:card estimate ind}
|A| \le (k+1) \cdot \rk A - (p+1)
\end{equation}  
for each non-empty subset $A \subseteq E$. Then the rank of $M_{k, p}$ satisfies 
\[
\rk_{M_{k, p}} (E) \ge |E| - \rk (E) + 1. 
\]
\end{proposition}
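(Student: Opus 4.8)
The plan is to choose a basis $I$ of $M(f) = M_{k,p}$ and bound $|I| = \rk_{M(f)}(E)$ from below by analyzing the ``tight'' subsets for $I$. Recall $f(A) = k\cdot\rk(A) - p$; as a function on $2^E$ it is monotone and submodular (because $\rk$ is, and $k \ge 0$), and applying \eqref{eq:card estimate ind} to a one-element set shows that $M$ has no loops and that $k > p$, so $f(A) \ge k - p \ge 1$ for every non-empty $A \subseteq E$. If $I = E$ there is nothing to prove, as $\rk(E) \ge 1$, so assume $I \subsetneq E$. For $x \in E \setminus I$, let $C_x \subseteq I + x$ be the fundamental circuit of $M(f)$ determined by $I$ and $x$; then $x \in C_x$ and $C_x - x \subseteq I$. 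By Lemma~\ref{lem:prop circuit}, $|C_x| = k\cdot\rk(C_x) - p + 1 = f(C_x) + 1$, so $|I \cap C_x| = |C_x| - 1 = f(C_x)$. Call $B \subseteq E$ \emph{tight} if $|I \cap B| = f(B)$. Since $I \cap B$ is always independent in $M(f)$, one has $|I \cap B| \le f(I \cap B) \le f(B)$; hence tight sets are exactly those attaining equality, and every $C_x$ is tight.

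I would then invoke the standard submodular argument that tight sets are closed under unions of overlapping members: if $B_1, B_2$ are tight and $B_1 \cap B_2 \neq \emptyset$, then $f(B_1 \cup B_2) + f(B_1 \cap B_2) \le f(B_1) + f(B_2)$, combined with $|I \cap (B_1 \cup B_2)| + |I \cap (B_1 \cap B_2)| = |I \cap B_1| + |I \cap B_2|$ and $f(B_i) \ge |I \cap B_i|$, forces equality throughout, so $B_1 \cup B_2$ is tight. (The set $B_1 \cap B_2$ is non-empty here, so the possibly negative value $f(\emptyset) = -p$ never intervenes.) Consequently the maximal tight sets $B_1, \dots, B_\ell$ are pairwise disjoint, each $C_x$ lies in exactly one of them, and $E \setminus I \subseteq B^* := B_1 \sqcup \dots \sqcup B_\ell$. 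Hence $E \setminus B^* \subseteq I$, so $|I \setminus B^*| = |E| - |B^*|$, while $|I \cap B^*| = \sum_{j} |I \cap B_j| = \sum_{j} f(B_j)$ by tightness; adding,
\[
|I| \;=\; |E| \;-\; \sum_{j=1}^{\ell} \bigl(|B_j| - f(B_j)\bigr),
\]
so it remains to prove $\sum_{j=1}^{\ell} \bigl(|B_j| - f(B_j)\bigr) \le \rk(E) - 1$.

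Two estimates finish this. First, \eqref{eq:card estimate ind} applied to each $B_j$ gives $|B_j| - f(B_j) = |B_j| - k\cdot\rk(B_j) + p \le \rk(B_j) - 1$, whence $\sum_j \bigl(|B_j| - f(B_j)\bigr) \le \sum_j \rk(B_j) - \ell$. Second, $I \cap B^*$ is independent in $M(f)$, so $\sum_j f(B_j) = |I \cap B^*| \le f(B^*) = k\cdot\rk(B^*) - p$; as $\sum_j f(B_j) = k\sum_j \rk(B_j) - \ell p$, this rearranges to $\sum_j \rk(B_j) \le \rk(B^*) + \tfrac{(\ell - 1)p}{k} \le \rk(E) + \tfrac{(\ell-1)p}{k}$. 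Combining the two, $\sum_j \bigl(|B_j| - f(B_j)\bigr) \le \rk(E) + \tfrac{(\ell - 1)p}{k} - \ell \le \rk(E) - 1$, the last inequality being $\tfrac{(\ell - 1)p}{k} \le \ell - 1$, which holds since $0 \le p < k$ (trivially when $\ell \le 1$, and the case $\ell = 0$ is the already treated case $I = E$). I expect the last balancing step to be the crux: the naive inequality $\sum_j \rk(B_j) \le \rk(E)$ is false for disjoint sets $B_j$, and the real content is that the genuine slack $\tfrac{(\ell-1)p}{k}$ — which is strictly less than $\ell - 1$ precisely because $p < k$ — is exactly what the term $-\ell$ from the per-block estimates absorbs. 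Keeping the accounting of the shift $-p$ honest (loop-freeness of $M$, non-emptiness of the intersections above, the strict inequality $p < k$) is where the care is needed.
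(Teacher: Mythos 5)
Your proof is correct, and it takes a genuinely different route from the paper's. The paper verifies the augmentation axiom for $M_{k,p}$ by contradiction: assuming an independent set $I$ with $|E\setminus I|\ge\rk(E)$ cannot be extended, it extracts from the fundamental circuits $F_i+b_i$ a family of rank-additive tight subsets $A_i\subseteq I$ (merging via closures when ranks fail to add), and derives incompatible upper and lower bounds on $|I|$. You instead fix a basis $I$ of $M(f)$ and compute its size exactly, $|I|=|E|-\sum_j\bigl(|B_j|-f(B_j)\bigr)$, by uncrossing the tight sets $|I\cap B|=f(B)$ into disjoint maximal blocks $B_j\subseteq E$ that absorb $E\setminus I$; the hypothesis then bounds the deficiency of each block by $\rk(B_j)-1$, and the second estimate $|I\cap B^*|\le f(B^*)$ converts $\sum_j\rk(B_j)$ into $\rk(E)+\tfrac{(\ell-1)p}{k}$, where $p<k$ closes the gap. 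This is the classical submodular tight-set argument (in the style of Edmonds' rank formulas for induced matroids): it is direct rather than by contradiction, yields an exact expression for the rank of $M_{k,p}$, and isolates precisely where the shift $-p$ versus $-(p+1)$ matters. The paper's version keeps all the tight sets inside $I$ and covers $B$ by their closures, which avoids your second estimate but requires the ad hoc merging step; the underlying submodular uncrossing is the same in both. One cosmetic point: the chain $|I\cap B|\le f(I\cap B)\le f(B)$ is literally false when $I\cap B=\emptyset$ and $p>0$ (since $f(\emptyset)=-p$), but the conclusion $|I\cap B|\le f(B)$ you actually use holds for every non-empty $B$ because $f\ge k-p\ge 1$ there, and you correctly guard the uncrossing step against this degenerate case.
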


\begin{proof}
Notice that by applying Assumption \eqref{eq:card estimate ind} to a set with one element, we get $k > p$. Thus, the matroid $M_{k, p} = M(f)$ with $f (A) = k \cdot \rk (A) - p$ is well-defined. 

Set $r = \rk M = \rk (E)$, and let $I \subseteq E$ be any independent set of $M(f)$. We have to show: If $|E - I | \ge r$, then there is some $b \in E - I$ such that $I + b$ is independent in $M(f)$. 

Suppose on the contrary that there is a subset $B = \{b_1,\ldots,b_r\}$ of $r$ elements in $E - I$ and that, for each $b_i \in B$ the set $I + b_i$ is dependent in $M(f)$. Then, for each $i$, there is a minimal subset $F_i \subset I$ such that $F_i + b_i$ is dependent in $M(f)$. Thus, $F_i + b_i$ is a circuit of $M(f)$.  Using  Lemma~\ref{lem:prop circuit}, we conclude  that, for each $i$, one has 
\[
b_i \in \cl (F_i) \; \text{ and } \; |F_i| = k \cdot \rk (F_i) - p. 
\]
Our next goal is to show the following assertion. 
\smallskip

\noindent 
\emph{Claim:} There are $s \le r$ subsets $A_1,\ldots,A_s$ of $I$ that satisfy the following conditions: 
\begin{itemize}

\item[(i)] $|A_i|  = k \cdot \rk (A_i) - p$; 

\item[(ii)] $B \subset \bigcup_{i=1}^s \cl (A_i)$; \; \text{ and } 

\item[(iii)] $\rk (A_i \cup A_j) = \rk (A_i) + \rk (A_j)$ if $i \neq j$.  
\end{itemize}

We prove this claim recursively. Initially, put $s = r$ and $A_i =
F_i$ for $i = 1,\ldots,r$. Then the set $\{A_1,\ldots,A_s\}$ satisfies
conditions (i) and (ii). Thus, the claim follows once we have shown:
If a set $\{A_1,\ldots,A_s\}$ satisfies conditions (i) and (ii), but
there are elements $A_i$ and $A_j$ with $i \neq j$ and $\rk (A_i \cup
A_j) \neq \rk(A_i)+\rk(A_j)$, then, setting $\hat{A_i} = I \cap \cl(A_i \cup A_j)$, the set 
$\{A_1,...,A_s\} - \{A_i,A_j\} + \hat{A_i}$ also satisfies conditions (i) and (ii). 

Indeed, repeating this process as many times as necessary will result in a collection of subsets of $I$ that satisfies conditions (i) - (iii) because in each step the number of subsets decreases and condition (iii) is trivially satisfied if $s = 1$.  

In order to establish the recursive step, it is enough to show that  $|\hat{A_i}|
= k \cdot \rk(\hat{A_i}) - p$ because $A_i \cup A_j \subseteq \hat{A_i}$ implies that condition (ii) is satisfied for the modified collection. 

To this end  notice that $|A_i \cap A_j| \le f (A_i \cap A_j)$ if $A_i \cap A_j \neq \emptyset$ because
$A_i \cap A_j$ is independent in $M(f)$. It follows that $|A_i
\cap A_j| \leq k [\rk(A_i)+\rk(A_j)-\rk(A_i \cup A_j)]-p$. Observe that this inequality holds even if $A_i \cap A_j = \emptyset$ because by our hypothesis $\rk (A_i) + \rk (A_j) > \rk (A_i \cup A_j)$, and $k > p$. Hence, we  obtain
\begin{align*}
|\hat{A_i}|  \ge |A_i \cup A_j| & = |A_i| + |A_j| - |A_i \cap A_j| \\
 & \ge  \left[k \cdot \rk(A_i) - p\right] + \left[k \cdot \rk(A_j) -
   p\right] \\
& \hspace{.3in} -  \left[k \cdot (\rk(A_i)+\rk(A_j)-\rk(A_i \cup A_j)) - p\right]\\
& =  k  \cdot \rk (A_i \cup A_j) - p \\
& = k \cdot  \rk(\hat{A_i}) - p, 
\end{align*}
 Since $\hat{A_i}$ is independent in $M(f)$, we also have $|\hat{A_i}| \le f (\hat{A_i}) =  k \cdot \rk(\hat{A_i}) - p$, and the desired equality $|\hat{A_i}|
= k \cdot \rk(\hat{A_i}) - p$ follows. Thus, the above claim is shown. 

Now we proceed with the proof of the proposition. Let $A_1,\ldots,A_s$ be a collection of non-empty subsets of $I$ satisfying conditions (i) - (iii) above. Set $B_i = B \cap A_i$. Using $B \subseteq E - I$ and applying the assumption to $A_i \cup B_i$, we get 
\[ 
|A_i| + |B_i| = |A_i \cup B_i| \le (k+1) \cdot  \rk(A_i\cup B_i)-(p+1) =
(k+1) \cdot \rk(A_i) - (p+1). 
\]
Hence condition (i) gives $|B_i| \le \rk (A_i) - 1$. Taking also into
account that the sets $A_1,\ldots,A_s$ are necessarily disjoint, we obtain 
\begin{align*}
|I|  \ge |\bigcup_{i=1}^s A_i| = \sum_{i=1}^s |A_i|  &  = \sum_{i=1}^s [ k \cdot \rk(A_i) - p] \\
& \ge \sum_{i=1}^s [ k \cdot  (|B_i|+1) - p] \\
& \geq k \left(\sum_{i=1}^s |B_i|\right) + s(k-p) = k \cdot  |B| + s(k-p). 
\end{align*}
Since $k > p$ and $|B| = r = \rk (E)$, it follows that 
\[ 
|I| \ge k \cdot \rk (E)  + 1. 
\]
However, this is impossible because 
\[
|I| \le |E - B|  = |E| - |B|  \le  (k+1) \cdot \rk(E) - (p+1) - |B| = k \cdot \rk (E)  
- (p+1).
\]
Thus, the argument is complete. 
\end{proof}

In order to establish  a consequence of this results, we need two particular matroid constructions.  

\begin{define}
   \label{def:ext matroid} Let $M$ be a matroid on $E$. 

(i) Suppose $M$ is a submatroid of a matroid $\tilde{M}$ on $\tilde{E}$. For any $e \in \tilde{E} \setminus E$, define a matroid $M/e$ on $E$  by the rank function $\rk_{M/e} (A) = \rk_{\tilde{M}} (A + e) - 1$ for subsets $A \subseteq E$. It is called an \emph{elementary quotient} of $M$. Note that  the independent sets of $M/e$ are the independent sets of $M$ whose span does not contain $e$. 
   
(ii)  Let $S$ be any subset of $E$. Realize the disjoint union $E \sqcup S$ as $(E, 0) \cup (S, 1)$. Denote by $M_{+S}$ the matroid whose independent sets are of the form $(I_1, 0) \cup (I_2, 1)$ with $\rk_M (I_1 \cup I_2) = |I_1| + |I_2|$. The matroid $M_{+S}$ is called the  \emph{parallel extension of $M$ by $S$}. 
\end{define} 

It is straightforward to check that $M_{+S}$ is indeed a matroid. Its rank is equal to the rank of $M$. More generally, if $A = (A_1,0) \cup (A_2, 1)$ is any subset of $E \sqcup S$, then $\rk_{M_{+S}} (A) = \rk_M (A_1 \cup A_2)$. 

\begin{corollary}
    \label{cor:inductive step} 
Let $\tilde{M}$ be a matroid on $\tilde{E} \neq \emptyset$, and let $M$ be the submatroid induced  on a subset  $E \neq \emptyset$ of $\tilde{E}$. 
Assume that, for non-negative integers  $k$ and $p$ and each non-empty subset $A \subseteq E$, one has 
\begin{equation*}
     \label{eq:card estimate}
|A| \le (k+1) \cdot \rk A - (p+1). 
\end{equation*}  
Then, for any $e \in \tilde{E}$, there is an independent set $I \subset E$ such that $e \notin \cl (I)$ and 
\[
|B| \le k \cdot \rk (B) - p 
\]   
for each non-empty subset $B \subseteq E-I$. 
\end{corollary}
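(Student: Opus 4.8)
The plan is to recast the assertion as a matroid partition problem: I want to split $E$ as $E = I \sqcup J$ with $I$ independent in a suitable elementary quotient of $M$ and $J$ independent in the matroid $M_{k,p}$; then $I$ is the set we are after. Two preliminary reductions make this precise. First, one may assume $e \in \tilde{E} \setminus E$: if $e \in E$, adjoin to $\tilde{M}$ a parallel copy of $e$ (cf.\ Definition~\ref{def:ext matroid}(ii)) and rename it $e$, which affects neither the hypothesis (it refers only to subsets of $E$) nor the conclusion, since a subset of $E$ contains $e$ in its span in the enlarged matroid exactly when it contains the old $e$ in its span. Second, let $N := M/e$ be the elementary quotient of Definition~\ref{def:ext matroid}(i): a matroid on $E$ with $\rk_N(A) = \rk_{\tilde{M}}(A+e) - 1$ whose independent sets are precisely the independent sets of $M$ whose span does not contain $e$. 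Then the conclusion of the corollary is equivalent to the existence of $I \subseteq E$ that is independent in $N$ and such that $E \setminus I$ is independent in $M_{k,p} = M(f)$, where $f(A) = k \cdot \rk A - p$; indeed, the condition ``$|B| \le k \cdot \rk B - p$ for every non-empty $B \subseteq E \setminus I$'' says exactly that every non-empty subset of $E \setminus I$ is independent in $M(f)$, which, since independence is hereditary, amounts to $E \setminus I$ itself being independent in $M(f)$. (Applying the hypothesis to a singleton forces $k > p$, so $M(f)$ is well-defined.)

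With this reformulation in hand, I would apply Theorem~\ref{thm:ext edmonds} to the two matroids $N$ and $M_{k,p}$ on the common ground set $E$: the desired partition $E = I \sqcup (E \setminus I)$ exists if and only if $|A| \le \rk_N(A) + \rk_{M_{k,p}}(A)$ for every $A \subseteq E$. The case $A = \emptyset$ is trivial. For non-empty $A$, monotonicity of the rank function of $\tilde{M}$ gives $\rk_N(A) = \rk_{\tilde{M}}(A+e) - 1 \ge \rk_{\tilde{M}}(A) - 1 = \rk(A) - 1$. For the other summand I would feed the \emph{restriction} $M|_A$ into Proposition~\ref{prop:key rank est}: its hypothesis is exactly the hypothesis of the corollary restricted to subsets of $A$, and since $f$ depends only on $\rk$, the circuits of $M_{k,p}$ contained in $A$ are precisely the circuits of $(M|_A)_{k,p}$, so that $(M_{k,p})|_A = (M|_A)_{k,p}$; hence Proposition~\ref{prop:key rank est} yields $\rk_{M_{k,p}}(A) = \rk_{(M|_A)_{k,p}}(A) \ge |A| - \rk(A) + 1$. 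Adding the two estimates gives $\rk_N(A) + \rk_{M_{k,p}}(A) \ge |A|$, as required, and Theorem~\ref{thm:ext edmonds} then delivers the partition, hence the set $I$.

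The step I expect to be the crux is the rank inequality $|A| \le \rk_N(A) + \rk_{M_{k,p}}(A)$ --- more precisely, the realization that Proposition~\ref{prop:key rank est} must be applied to each restriction $M|_A$ rather than to $M$ directly, which is legitimate only because the assignment $M \mapsto M_{k,p}$ commutes with matroid restriction. Everything else (the reduction to $e \notin E$, the identification of $N$ as an elementary quotient, and the invocation of the Edmonds--Fulkerson criterion) is routine once $N$ and $M_{k,p}$ have been singled out as the relevant matroids.
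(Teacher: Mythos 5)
Your proposal is correct and follows essentially the same route as the paper's proof: apply Proposition~\ref{prop:key rank est} to the submatroid induced on each non-empty $A\subseteq E$ to bound $\rk_{M_{k,p}}(A)$ from below, pair this with the elementary quotient $M/e$ (using the parallel extension when $e\in E$), and invoke Theorem~\ref{thm:ext edmonds} to produce the partition $E=I\sqcup J$. The only cosmetic difference is that you perform the parallel-extension reduction up front, whereas the paper treats $e\notin E$ and $e\in E$ as two separate cases at the end.
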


\begin{proof}
Consider the function $f: 2^E \to \Z$ defined by $f (A) = k \cdot \rk (A) - p$, and denote the submatroid of $\tilde{M}$ induced on $E$ by $M$.  

Let  $A \neq \emptyset$ be any subset of $E$. Applying Proposition \ref{prop:key rank est} to the  submatroid of $M$ induced on $A$, we get 
$\rk_{A(f)} (A) \ge |A| - \rk (A) + 1$, and so 
\begin{equation}
   \label{eq:est decomp}
   |A| \le \rk (A) + \rk_{A(f)} (A) - 1 \le \rk (A) + \rk_{M(f)} (A) - 1. 
\end{equation} 
We now consider two cases. 
\smallskip 

\emph{Case 1}: Suppose $e$ is not in $E$. Consider the elementary quotient $M/e$ on $E$. By definition, for each subset $A \subseteq E$, one has $\rk_{M/e} (A) = \rk_{\tilde{M}} (A + e) - 1$. It follows that $\rk_{M/e} (A) \ge \rk (A) -1$. Hence, Equation \eqref{eq:est decomp} gives
\[
|A| \le \rk_{M/e} (A) + \rk_{M(f)} (A). 
\]
Using Theorem \ref{thm:ext edmonds}, we conclude that there is a decomposition $E  = I \sqcup J$ such that $I$ is independent in  $M/e$ and $J$ is independent in $M(f)$. Be definition of $M/e$, the span of $I$ does not contain $e$. Therefore, $E = I \sqcup J$ is a partition with the required properties because,  for each subset $B \neq \emptyset$ of $J$, one has 
\[
|B| \le f(B) = k \cdot \rk (B) - p 
\]
as $J$ is independent in $M(f)$. 
\smallskip 

\emph{Case 2}: 
Suppose $e$ is in  $E$. Then consider first the parallel extension $M_{+\{e\}}$ of $M$ on the set $(E, 0) \cup \{(e, 1)\}$. Second, passing to an elementary quotient of $M_{+\{e\}}$, we get a matroid $M_{+\{e\}}/(e,1)$ on the ground set $(E, 0)$. To simplify notation, let us denote the latter matroid by $M_{+e}/e$ and identify its ground set with $E$. Thus, we get for $A \subseteq E$ that 
\[
\rk_{M_{+e}/e} (A) = \rk_{M_{+\{e\}}} ((A, 0) \cup \{(e, 1)\}) - 1 = \rk (A+e) - 1 \ge \rk (A) - 1.  
\]
Now we conclude as in Case 1, using $M_{+e}/e$ in place of the matroid $M/e$. 
\end{proof}

We are now in a position to establish the  announced partition result.

\begin{proof}[Proof of Theorem \ref{thm:main matroid}]
If $p = 0$, then  the assertion is true by Edmond's criterion (Corollary \ref{cor:edmonds}). 

Let $p \ge 1$. First, we construct a suitable partition for a fixed $p$-tuple $(a_1,\ldots,a_p) \in \tilde{E}^p$ step by step. Consider $a_1 \in E$. By Corollary \ref{cor:inductive step}, there is a partition $E = I_1 \sqcup J_1$ such that $I_1$ is independent in $M$, $e_1 \notin \cl (I_1)$, and $|B| \le (k-1) \cdot \rk (B) - (p-1)$  for each non-empty subset $B \subseteq J_1$. Thus, we are done if  $p = 1$. If $ p \ge 2$, 
we apply Corollary \ref{cor:inductive step} again, this time to $a_2 \in E$ and the  submatroid of $M$ induced on $J_1$. 
After $p$ applications of Corollary \ref{cor:inductive step}, we obtain a partition $E = I_1 \sqcup \ldots  \sqcup I_p \sqcup J_p$  such that $I_1,\ldots,I_p$ are independent in $M$, $a_j$ is not in the span of $I_j$ for each $j$, and $|B| \le (k-p) \cdot \rk (B)$  for each non-empty subset $B \subseteq J_p$. Applying Corollary \ref{cor:edmonds} to the submatroid on $J_p$, we get a partition $J_p = I_{p+1} \sqcup \ldots  \sqcup I_k$ into independent sets of $M$. This produces a desired partition for a fixed $(a_1,\ldots,a_p)$. 
 
Second, we note that in the above construction the first $p$ independent sets are obtained sequentially. Once the sets $I_1,\ldots,I_{j -1}$ have been found, the set $I_j$ is determined in the complement of $I_1 \sqcup \ldots \sqcup I_{j-1}$. It depends on the choice of $a_j$, but not on the elements $a_{j+1},\ldots,a_k$.  This shows in particular that the sets $I_1,\ldots,I_q$ are independent of the elements $a_{q+1},\ldots,a_k$. Thus, the argument is complete. 
\end{proof}

\begin{remark} 
     \label{rem:matroid partition} 
(i) Using the notation of the proof of Corollary~\ref{cor:inductive step}, the  partition result in Theorem \ref{thm:main matroid} can be also stated as follows: There is a partition $E = I_1 \sqcup \cdots \sqcup I_k$ such that $I_{p+1},\ldots,I_k$ are independent in $M$ and, for each $j= 1,\ldots,p$, the set $I_j$ is independent in $M/a_j$ if $a_j \notin E$ and independent in $M_{+a_j}/a_j$ if $a_j \in E$, respectively.

(ii)   If the ground set $E$ of a matroid  can be partitioned into $k$ independent sets, then Edmond's criterion (Corollary \ref{cor:edmonds}) implies that there is an independent set $I$ such that $|A| \le (k-1) \cdot \rk A$ for each subset $A$ of $E \setminus I$. Thus, for a matroid satisfying the assumptions of Theorem \ref{thm:main matroid}, it is natural to wonder if there is an independent set $I$ of $E$ such that,  for each $e \in I$ and each $A \subset (E \setminus I) + e$, one has $|A| \le (k-1) \cdot \rk_{\tilde{M}} A - p$. However, this is not always possible, not even for representable matroids, see Example~\ref{exa:partion optimal}. 
\end{remark} 


\section{Inductive Techniques}

We now begin considering zero-dimensional subschemes of projective space. In this section we collect some facts that are used in subsequent parts of this note. 

Let $K$ be an arbitrary field, and let $X$ be any projective subscheme of some projective space $\PP^n = \PP^n_K$.  For short, we often write $H^1 (\Ic_X (j))$ instead of $H^1 (\PP^n, \Ic_X (j))$ for the first cohomology of its ideal sheaf $\Ic_X$. We use $R = K[x_0,\ldots,x_n]$ to denote the coordinate ring of $\PP^n$. 

\begin{lemma}
    \label{lem:reg subscheme}
 Let $X \subset \PP^n$ be a zero-dimensional subscheme.    
\begin{itemize}
\item[(a)] Then $r (X) = \min \{ j \in \Z \; | \; H^1 (\Ic_X (j)) = 0\}$. 

\item[(b)] For any  zero-dimensional subscheme $Z$ of $X$, one has that $r(Z) \le r(X)$.  
\end{itemize}
\end{lemma}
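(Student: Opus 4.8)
The statement to prove is Lemma~\ref{lem:reg subscheme}: for a zero-dimensional subscheme $X \subset \PP^n$, part (a) identifies the regularity index $r(X)$ with $\min\{j : H^1(\Ic_X(j)) = 0\}$, and part (b) asserts monotonicity $r(Z) \le r(X)$ for subschemes $Z \subseteq X$. Both are standard, so the plan is to assemble known cohomological facts rather than to do anything delicate.

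\medskip

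\emph{Part (a).} I would start from the definition: $r(X)$ is the least $d$ such that $X$ imposes independent conditions on forms of degree $d$, equivalently such that the restriction map $H^0(\PP^n, \Oc_{\PP^n}(d)) \to H^0(X, \Oc_X(d))$ is surjective. From the short exact sequence of sheaves $0 \to \Ic_X(d) \to \Oc_{\PP^n}(d) \to \Oc_X(d) \to 0$ one gets the long exact sequence
\[
0 \to H^0(\Ic_X(d)) \to H^0(\Oc_{\PP^n}(d)) \xrightarrow{\rho_d} H^0(\Oc_X(d)) \to H^1(\Ic_X(d)) \to H^1(\Oc_{\PP^n}(d)).
\]
Since $d \ge 0$ and $H^1(\PP^n, \Oc_{\PP^n}(d)) = 0$ for $n \ge 1$ (and trivially the whole setup is vacuous or trivial for $n = 0$), the map to $H^1(\Oc_{\PP^n}(d))$ is zero, so $\rho_d$ is surjective precisely when $H^1(\Ic_X(d)) = 0$. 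For negative $d$ the group $H^0(\Oc_{\PP^n}(d))$ vanishes while $H^0(\Oc_X(d)) \cong H^0(\Oc_X) \ne 0$ (as $X$ is a nonempty zero-dimensional scheme, $H^0(\Oc_X(j))$ is a fixed nonzero vector space for all $j$), so $\rho_d$ is never surjective there and $H^1(\Ic_X(d)) \ne 0$; hence the minimum over $j \in \Z$ agrees with the minimum over $j \ge 0$, and both equal $r(X)$. One should also remark that $H^1(\Ic_X(j))$ is eventually zero (Serre vanishing) so the minimum is attained; this is what makes $r(X)$ a well-defined integer.

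\medskip

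\emph{Part (b).} Given $Z \subseteq X$ with $Z$ zero-dimensional, I would use the inclusion of ideal sheaves $\Ic_X \subseteq \Ic_Z$, which gives a short exact sequence $0 \to \Ic_X \to \Ic_Z \to \Ic_Z/\Ic_X \to 0$, where the quotient is a sheaf supported on the zero-dimensional locus of $X$ (a skyscraper-type sheaf), hence has vanishing $H^1$ after any twist. Twisting by $\Oc(j)$ and taking cohomology yields a surjection $H^1(\Ic_X(j)) \twoheadrightarrow H^1(\Ic_Z(j))$. Therefore $H^1(\Ic_X(j)) = 0$ forces $H^1(\Ic_Z(j)) = 0$, and combining with part (a) gives $r(Z) \le r(X)$. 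Alternatively one can argue purely on the level of Hilbert functions: surjectivity of $\rho_d$ for $X$ factors the restriction to $Z$ through the restriction to $X$ (since $H^0(\Oc_X(d)) \to H^0(\Oc_Z(d))$ is surjective, again because the quotient sheaf $\Oc_X/\Oc_Z$ is a skyscraper), so if $X$ imposes independent conditions in degree $d$ then so does $Z$.

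\medskip

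\emph{Main obstacle.} There is essentially no obstacle here — the only points requiring a word of care are (i) confirming that the quotient sheaf $\Ic_Z/\Ic_X$ (equivalently $\Oc_X/\Oc_Z$) is indeed zero-dimensional and hence $H^1$-acyclic, which follows from $Z \subseteq X$ being a closed subscheme with the same (zero-dimensional) support, and (ii) pinning down the negative-degree behavior so that the minimum over all of $\Z$ really coincides with $r(X)$ rather than being some negative number; both are handled by the observation that $H^0(\Oc_X(j))$ is a nonzero vector space of constant dimension $\deg X$ for every $j \in \Z$.
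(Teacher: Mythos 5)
Your proof is correct and follows essentially the same route as the paper: part (a) is the standard identity $h_X(j) - \deg X = -\dim_K H^1(\Ic_X(j))$ coming from the ideal-sheaf sequence, and part (b) compares $X$ and $Z$ via a quotient supported in dimension zero. The only difference is cosmetic: the paper phrases (b) with the graded exact sequence $0 \to I_Z/I_X \to R/I_X \to R/I_Z \to 0$ and Hilbert functions, while you use the corresponding sheaf sequence and the surjection $H^1(\Ic_X(j)) \to H^1(\Ic_Z(j))$.
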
 

\begin{proof} These results are known to specialists. We include a proof for the convenience of the reader. 
Part (a) is a consequence of 
\[
h_X (j) - \deg X = -  \dim_K  H^1 (\Ic_X (j)). 
\]
This relation also shows that $h_X (j) \le \deg X$ for all integers $j$ and that equality is true if and only if $j \ge r(X)$. Hence, the 
exact sequence $0 \to I_Z/I_X \to R/I_X \to R/I_Z \to 0$ gives that $h_X (j) = \deg X$ implies $h_Z (j) = \deg Z$. Now (b) follows. 
\end{proof}

A special case of Lemma \ref{lem:reg subscheme}(b) has been shown in \cite[Proposition 3.2]{T-16}. We also need an extension of \cite[Lemma 1]{CTV}. 

\begin{lemma}
    \label{lem:CTV}
Let $Z \subset \PP^n$ be a zero-dimensional scheme, and let $P \in \PP^n$ be a point that is not in the support of $Z$. Then one has, for every integer $m \ge 1$, 
\[
r (Z + m P) = \max\{ m-1, \; r(Z), \; 1 + \reg (R/(I_Z + I_P^m))\}. 
\]    
\end{lemma}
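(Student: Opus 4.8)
The plan is to prove Lemma~\ref{lem:CTV} by analyzing the relevant cohomology groups via short exact sequences, using part (a) of Lemma~\ref{lem:reg subscheme} to translate regularity indices into vanishing statements about $H^1$. Set $W = Z + mP$ and note that $I_W = I_Z \cap I_P^m$. Since $P \notin \supp Z$, the ideals $I_Z$ and $I_P^m$ are comaximal in every sufficiently positive degree, but more robustly one has the exact sequence of sheaves
\[
0 \to \Ic_{W} \to \Ic_Z \oplus \Ic_{P^m} \to \Ic_{Z} + \Ic_{P^m} \to 0,
\]
where the last sheaf is supported away from where $Z$ and $mP$ meet, hence is zero because $Z$ and $mP$ have disjoint support. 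So in fact $\Ic_W \cong \Ic_Z \cap \Ic_{P^m}$ fits into $0 \to \Ic_W \to \Ic_Z \oplus \Ic_{P^m} \to \Oc_{\PP^n} \to 0$ after twisting, but the cleanest route is the Mayer--Vietoris-type sequence giving, for each $j$,
\[
H^1(\Ic_W(j)) \hookrightarrow H^1(\Ic_Z(j)) \oplus H^1(\Ic_{P^m}(j))
\]
once $j$ is large enough that $H^0$ surjects; one must handle the low-degree $H^0$ contribution separately, which is where the term $1 + \reg(R/(I_Z + I_P^m))$ enters.

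The key steps, in order, are as follows. First I would record that $r(Z+mP) \ge \max\{m-1, r(Z)\}$: the inequality $r(Z+mP)\ge r(Z)$ is immediate from Lemma~\ref{lem:reg subscheme}(b) since $Z \subseteq Z+mP$, and $r(Z+mP) \ge r(mP) = m-1$ likewise, using that $r$ of a single fat point $mP$ equals $m-1$. Second, I would show $r(Z+mP) \ge 1 + \reg(R/(I_Z+I_P^m))$. For this, consider the exact sequence $0 \to R/I_W \to R/I_Z \oplus R/I_P^m \to R/(I_Z + I_P^m) \to 0$ coming from $I_W = I_Z \cap I_P^m$; taking the associated long exact sequence in local cohomology (or comparing Hilbert functions), the module $R/(I_Z+I_P^m)$ is Artinian, and its top-degree nonvanishing plus one controls when the Hilbert function of $R/I_W$ reaches $\deg W = \deg Z + \binom{m-1+n}{n}$. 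Concretely, $h_W(j) = h_Z(j) + h_{P^m}(j) - h_{I_Z + I_P^m}(j)$, and $h_W(j) = \deg W$ forces all three terms on the right to be maximal, in particular $h_{I_Z+I_P^m}(j) = 0$, i.e. $j \ge \reg(R/(I_Z+I_P^m))$; combined with part (a) this gives the bound. Third, and most substantively, I would prove the reverse inequality: if $j \ge \max\{m-1, r(Z), 1+\reg(R/(I_Z+I_P^m))\}$ then $H^1(\Ic_W(j)) = 0$. Here one uses that for such $j$ one simultaneously has $h_Z(j) = \deg Z$, $h_{P^m}(j) = \deg P^m$ (automatic once $j \ge m-1$), and $h_{I_Z+I_P^m}(j) = 0$, so the Hilbert function identity above yields $h_W(j) = \deg W$, which by part (a) is exactly $H^1(\Ic_W(j)) = 0$.

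The main obstacle I anticipate is making the Hilbert-function bookkeeping fully rigorous, specifically the identity $h_W(j) = h_Z(j) + h_{P^m}(j) - h_{(I_Z+I_P^m)}(j)$ and the fact that each summand attains its stable value precisely in the degree range dictated by the corresponding regularity index. This requires knowing $r(mP) = m-1$ (equivalently that $I_P^m$ agrees with its saturation in degrees $\ge m-1$ and the Hilbert function of $R/I_P^m$ stabilizes there), which is a standard but not entirely trivial computation for powers of a linear ideal, and knowing that $R/(I_Z + I_P^m)$ is Artinian so that $\reg$ of it is just the top nonzero degree of its Hilbert function. A secondary subtlety is that one wants both directions to use the \emph{same} exact sequence uniformly, so I would set up the sequence $0 \to R/I_W \to R/I_Z \oplus R/I_P^m \to R/(I_Z+I_P^m) \to 0$ once at the start and read off both the lower bounds (step two) and the vanishing (step three) from it, invoking Lemma~\ref{lem:reg subscheme}(a) to convert between $h_\bullet(j) = \deg(\bullet)$ and $H^1(\Ic_\bullet(j)) = 0$ at each use. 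Finally I would assemble the three inequalities into the claimed equality.
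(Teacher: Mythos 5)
Your proof is correct and follows essentially the same route as the paper: both rest on the graded Mayer--Vietoris sequence $0 \to R/I_{Z+mP} \to R/I_Z \oplus R/I_{mP} \to R/(I_Z + I_P^m) \to 0$ and the resulting Hilbert function identity, combined with Lemma~\ref{lem:reg subscheme}(a), $r(mP)=m-1$, and the fact that the Artinian quotient $R/(I_Z+I_P^m)$ vanishes in degree $j$ exactly when $j \ge 1+\reg(R/(I_Z+I_P^m))$. The only blemishes are cosmetic: the opening sheaf-level digression misstates the cokernel (it is the structure sheaf, not zero, as you then correct), and in your second step ``$j \ge \reg(R/(I_Z+I_P^m))$'' should read ``$j \ge 1+\reg(R/(I_Z+I_P^m))$''; neither affects the argument you actually carry out.
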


\begin{proof}
The argument is essentially  given in \cite{CTV}. We recall it for the reader's convenience. 

Consider the Mayer-Vietoris sequence
\[
0 \to R/I_{Z+m P} \to R/I_Z \oplus R/I_{m P} \to R/(I_Z + I_P^m) \to 0. 
\]
Since $\deg (Z + mP) = \deg Z + \deg (mP)$ and $r(mP) = m-1$,  it shows that $h_{Z+ m P} (j) = \deg (Z + mP)$ if and only if $h_Z (j) = \deg Z$, \; $h_{mP} (j) = \deg {mP}$, and $[R/(I_Z + I_P^m)]_j = 0$. 
\end{proof} 

The following result follows from a standard residual sequence (see
\cite[Theorem 3.2]{FL} for a special case). 

\begin{lemma}[Inductive Technique 1] 
    \label{lem:indTech 1}
Let $Z \subset \PP^n$ be a zero-dimensional scheme, and let $F \subset \PP^n$ be a   hypersurface defined by a form $f \in R$. Denote by $\emptyset \neq W \subset \PP^n$ the residual of $Z$ with respect to $F$ (defined by $I_Z : f$). If $Z \cap F \neq \emptyset$, then one has 
\[
r(Z) \le \max\{ r(W) + \deg F, \; r (Z \cap F)\}. 
\]
\end{lemma}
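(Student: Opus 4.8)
The plan is to exploit the standard residual exact sequence associated with the hypersurface $F$. Multiplication by the form $f$ defining $F$ gives an exact sequence of sheaves
\[
0 \to \Ic_W (j - \deg F) \xrightarrow{\cdot f} \Ic_Z (j) \to \Ic_{Z \cap F, F} (j) \to 0,
\]
where $\Ic_{Z\cap F, F}$ is the ideal sheaf of the subscheme $Z \cap F$ inside $F$. Taking the long exact sequence in cohomology on $\PP^n$ (using that $H^i$ of a sheaf supported on $F$ can be computed on $F$), one obtains for every integer $j$ an exact piece
\[
H^1(\Ic_W(j - \deg F)) \to H^1(\Ic_Z(j)) \to H^1(F, \Ic_{Z\cap F, F}(j)).
\]
By Lemma~\ref{lem:reg subscheme}(a), it suffices to show that $H^1(\Ic_Z(j)) = 0$ for every $j \ge \max\{ r(W) + \deg F,\ r(Z\cap F)\}$; for such $j$ both outer terms vanish — the left one because $j - \deg F \ge r(W)$, and the right one because $r(Z\cap F) = \min\{ j \mid H^1(\Ic_{Z\cap F}(j)) = 0\}$ and the $H^1$ of $\Ic_{Z\cap F}$ on $\PP^n$ agrees with that of $\Ic_{Z\cap F, F}$ on $F$ (the cokernel sheaf is the pushforward from $F$, and pushforward along the closed immersion $F \hookrightarrow \PP^n$ preserves cohomology).

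First I would set up the residual sequence carefully, noting that $W$ is defined by the ideal $I_Z : f$ and that $Z \cap F$ is defined by $I_Z + (f)$, so that the sequence $0 \to R/(I_Z : f)(-\deg F) \xrightarrow{\cdot f} R/I_Z \to R/(I_Z + (f)) \to 0$ is exact at the level of graded modules; sheafifying yields the displayed sheaf sequence. Then I would identify $H^1(\PP^n, \Ic_{Z\cap F, F}(j))$ with the graded piece controlling $r(Z\cap F)$, using Lemma~\ref{lem:reg subscheme}(a) applied to the zero-dimensional scheme $Z \cap F$ viewed inside $\PP^n$ (the hypothesis $Z \cap F \neq \emptyset$ guarantees this is a genuine nonempty zero-dimensional scheme, and the hypothesis $W \neq \emptyset$ ensures $\deg F \ge 1$ so the twist shift is meaningful). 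Finally I would chase the long exact sequence to conclude vanishing of $H^1(\Ic_Z(j))$ in the asserted range and invoke Lemma~\ref{lem:reg subscheme}(a) once more to translate this into the bound on $r(Z)$.

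The main technical point — really a bookkeeping obstacle rather than a deep one — is correctly matching up the cohomology of the ideal sheaf of $Z \cap F$ as a subscheme of $F$ with the quantity $r(Z\cap F)$ that is defined via the embedding in $\PP^n$. One must check that the cokernel in the residual sequence is precisely $\iota_* \Ic_{Z\cap F, F}$, where $\iota\colon F \hookrightarrow \PP^n$, equivalently the sheaf associated to the graded module $R/(I_Z + (f))$, and that $R/(I_Z + (f))$ has the same Hilbert function in each degree as $R/I_{Z\cap F}$ (since $I_{Z \cap F}$ is by definition the saturation of $I_Z + (f)$, this can differ only in low degrees, exactly where the $H^1$ term lives — so some care is needed). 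This is standard and is exactly the content of the residual exact sequence as used in \cite[Theorem 3.2]{FL}, but it is the step where one could slip. Everything else is a direct diagram chase combined with Lemma~\ref{lem:reg subscheme}(a).
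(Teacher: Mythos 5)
Your proposal is correct and follows essentially the same route as the paper: the residual exact sequence $0 \to \Ic_W(-\deg F) \to \Ic_Z \to \Ic_{Z\cap F}\to 0$, its long exact cohomology sequence, and Lemma~\ref{lem:reg subscheme}(a) to translate the $H^1$ vanishing into the bound on $r(Z)$. The only difference is that you spell out the identification of the cokernel with the ideal sheaf of $Z\cap F$ in $F$ and its cohomological comparison with $r(Z\cap F)$, a bookkeeping point the paper passes over silently.
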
 

\begin{proof} 
Let $d = \deg F$. 
Multiplication by $f$ induces the following exact sequence of ideal sheaves
\[
0 \to \Ic_W (-d) \to \Ic_Z \to \Ic_{Z \cap F} \to 0. 
\]
Its long exact cohomology sequence gives, for all integers $j$, 
\[
H^1 (\Ic_W (j-d)) \to H^1 (\Ic_Z (j)) \to H^1 (\Ic_{Z \cap F}(j)). 
\]
Now the claim follows because $r (Z) = \min \{ j \in \Z \; | \; H^1 (\Ic_Z (j)) = 0\}$ (see Lemma  \ref{lem:reg subscheme}). 
\end{proof} 

If a hypersurface $F$ is defined by a form $f$, then we also write $Res_f (Z)$ for $Res_F (Z)$. 

For induction on the multiplicity of a point in the support of a fat point scheme, the  statement below will be useful. 

\begin{lemma}[Inductive Technique 2] 
   \label{lem:indTech 2}
Let $Z \subset \PP^n$ be a zero-dimensional scheme, and let $P \in \PP^n$ be a point that is not in the support of $Z$.   Fix integers $m$ and $k$ with $1 \le k \le m-1$. Set $t = \binom{n-1+k}{k}$. Assume there are homogeneous polynomials $g_1,\ldots,g_t \in R$ and  $f_1,\ldots,f_t \in R$ such that $I_P^k = (g_1,\ldots,g_t)$, \ $f_i (P) \neq 0$, and 
\[
r (Res_{g_i f_i} (Z + mP)) \le b - k - \deg f_i
\]
 for all $i \in \{1,2,\ldots,t\}$ and some integer $b \ge m-1$. If $r(Z + (m-1) P) \leq b$, then $r (Z + m P) \le b$. 
\end{lemma}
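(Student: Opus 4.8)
\textbf{Proof proposal for Lemma \ref{lem:indTech 2}.}

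The plan is to run a descending induction on $j$ from large values down to $b+1$, showing that $H^1(\Ic_{Z+mP}(j)) = 0$ for all $j \ge b$; equivalently, by Lemma \ref{lem:reg subscheme}(a), that $r(Z+mP) \le b$. The key idea is to combine Inductive Technique 1 (Lemma \ref{lem:indTech 1}) applied to each hypersurface $F_i$ defined by the form $g_i f_i$ of degree $k + \deg f_i$, with the hypothesis $r(Z+(m-1)P) \le b$. First I would record what the residual $\mathrm{Res}_{g_i f_i}(Z+mP)$ looks like: since $f_i(P) \ne 0$, the form $f_i$ does not vanish at $P$, so passing to the residual with respect to $g_i f_i$ strips one copy of the local structure coming from $g_i \in I_P^k$ at $P$ but otherwise only affects $Z$ (which does not contain $P$ in its support). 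The role of the $g_i$ generating $I_P^k$ is to guarantee that the ideals $(I_{Z+mP} : g_i f_i)$, as $i$ ranges over $1,\dots,t$, jointly control the degree-$j$ piece — this is the standard device (as in \cite{CTV}, \cite{FL}) for reducing the multiplicity at $P$ by $k$ at the cost of degree $k$.

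Here is the structure I would use. Fix $j \ge b$ and suppose $H^1(\Ic_{Z+mP}(j-1)) = 0$ is already known (the base case being $j \gg 0$). I want $[R/I_{Z+mP}]_j$ to have the expected dimension $\deg(Z+mP)$, i.e. that $Z+mP$ imposes independent conditions in degree $j$. Consider a form $h \in [I_{Z+(m-1)P}]_j$; since $r(Z+(m-1)P) \le b \le j$, such forms realize all the conditions imposed by $Z+(m-1)P$. What must be checked is that one can additionally satisfy the extra conditions distinguishing $mP$ from $(m-1)P$ at $P$, which are indexed by the degree-$(m-1)$ part of the local ring, or more precisely by $I_P^{m-1}/I_P^m$. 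For each generator-type obstruction, multiply a suitable form coming from the residual scheme by $g_i f_i$: by hypothesis $r(\mathrm{Res}_{g_i f_i}(Z+mP)) \le b - k - \deg f_i \le j - \deg(g_i f_i)$, so $[\Ic_{\mathrm{Res}_{g_i f_i}(Z+mP)}(j - \deg(g_i f_i))]$ is ``as large as possible,'' and multiplying by $g_i f_i$ injects these into $[I_{Z+mP}]_j$ modulo $[I_{Z+(m-1)P}]_j$. Since the $g_i$ generate $I_P^k$ and $k \le m-1$, these products span the missing directions in $I_P^{m-1}/I_P^m$ (one uses $1 \le k \le m-1$ here to ensure $I_P^k \supseteq I_P^{m-1}$-type multiples reach the relevant graded pieces). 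Combining, $[R/I_{Z+mP}]_j$ attains dimension $\deg(Z+mP)$, giving $H^1(\Ic_{Z+mP}(j)) = 0$ and closing the induction.

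A cleaner way to package the same argument, which I would actually prefer to write, is cohomological: for each $i$, Lemma \ref{lem:indTech 1} applied to $F_i = V(g_i f_i)$ gives the exact sequence
\[
H^1(\Ic_{W_i}(j - k - \deg f_i)) \to H^1(\Ic_{Z+mP}(j)) \to H^1(\Ic_{(Z+mP)\cap F_i}(j)),
\]
where $W_i = \mathrm{Res}_{g_i f_i}(Z+mP)$. The hypothesis $r(W_i) \le b - k - \deg f_i$ kills the left term for $j \ge b$, so $H^1(\Ic_{Z+mP}(j))$ injects into $H^1(\Ic_{(Z+mP)\cap F_i}(j))$ for every $i$. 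Now $(Z+mP) \cap F_i$ is a scheme supported on $(Z \cap F_i) \cup \{P\}$ whose structure at $P$ is cut out by $I_P^m + (g_i f_i) = I_P^m + (g_i)$ (as $f_i(P) \ne 0$). Intersecting the vanishing of all these $H^1$'s over $i = 1,\dots,t$ and using $(g_1,\dots,g_t) = I_P^k$, the joint scheme intersection at $P$ becomes governed by $I_P^m + I_P^k = I_P^k$, which together with the bound $r(Z+(m-1)P) \le b$ on the ``$Z$ part'' forces $H^1(\Ic_{Z+mP}(j)) = 0$ for $j \ge b$. The main obstacle I anticipate is precisely this last coalescing step: verifying that the common image of $H^1(\Ic_{Z+mP}(j))$ in all the $H^1(\Ic_{(Z+mP)\cap F_i}(j))$ vanishes requires carefully tracking how the local ideals $I_P^m + (g_i)$ combine and interact with the $Z$-side, and matching the degree shifts $k + \deg f_i$ against the single budget $b$; the condition $b \ge m-1$ and $r(Z+(m-1)P) \le b$ are exactly what make the two halves (local multiplicity at $P$ versus the rest) fit together, so the bookkeeping there is where the real content lies rather than in any one exact sequence.
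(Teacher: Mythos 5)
Your proposal does not close the lemma; both of your packagings stall exactly at the step that carries all the content. In the first version, the products $g_if_i\cdot h$ with $h$ a section of $\Ic_{W_i}(j-k-\deg f_i)$ lie in $I_{Z+mP}=I_Z\cap I_P^m$, so they vanish to order at least $m$ at $P$ and therefore contribute \emph{nothing} to the quotient $I_P^{m-1}/I_P^{m}$; the assertion that ``these products span the missing directions'' is false as stated. The paper's proof avoids this trap by working not with $I_{W_i}=J_i\cap I_P^{m-k}$ (where $J_i=I_Z:(g_if_i)$) but with the larger ideal $J_i+I_P^{m-k}$: it first reduces, via Lemma \ref{lem:CTV} and Lemma \ref{lem:reg subscheme}, to the single equality $[I_Z+I_P^{m-1}]_b=[I_Z+I_P^m]_b$, equivalently $\ell^{b-m+1}[I_P^{m-1}]_{m-1}\subset I_Z+I_P^m$ for a linear form $\ell$ with $\ell(P)\neq 0$; then it applies the same Mayer--Vietoris decomposition at $P$ to each residual $W_i$ (which has multiplicity exactly $m-k$ at $P$) to extract $\ell^{b-d_i-m+1}[I_P^{m-k-1}]_{m-k-1}\subset J_i+I_P^{m-k}$, and only then multiplies by $g_if_i$, which sends $J_i$ into $I_Z$ and $I_P^{m-k}$ into $I_P^m$. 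The final substitution $f_i=h_i+\ell^{d_i}$ with $h_i\in I_P$ and the fact that the $g_i$ generate $I_P^k$ are what produce $\ell^{b-m+1}[I_P^{m-1}]_{m-1}\subset I_Z+I_P^m$. None of this mechanism appears in your sketch, and the hypotheses $b\ge m-1$ and $r(Z+(m-1)P)\le b$ are never actually deployed.

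Your second, ``cohomological'' packaging has a parallel gap. The exact sequence from Lemma \ref{lem:indTech 1} does give an injection of $H^1(\Ic_{Z+mP}(j))$ into $H^1(\Ic_{(Z+mP)\cap F_i}(j))$ for each $i$ once the residual term vanishes, but an injection into several groups yields nothing unless you can show the image is zero in at least one of them, and the hypotheses give no control whatsoever on $r((Z+mP)\cap F_i)$ --- the trace at $P$ is cut out by $I_P^m+(g_i)$, a scheme containing $kP$ whose regularity is not bounded by anything assumed. The ``coalescing step'' you flag as the remaining obstacle is precisely the theorem; note also that the paper never invokes Lemma \ref{lem:indTech 1} here at all, but instead runs the entire argument through explicit graded ideal containments via Lemma \ref{lem:CTV}.
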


\begin{proof}
By Lemma~\ref{lem:CTV}, it is enough to show $[R/(I_Z + I_P^m)]_b = 0$. 
Observe that 
\[
\dim_K [R/(I_Z + I_P^m)]_b = \sum_{j=0}^{m-1} \dim_K [(I_Z + I_P^j)/(I_Z + I_P^{j+1})]_b. 
\]
By assumption and Lemma \ref{lem:reg subscheme}, we know $r(Z + jP) \leq b$ if $0 \le j < m$. Hence Lemma~\ref{lem:CTV} gives $[I_Z + I_P^j]_b = [R]_b$. It follows that 
\[
\dim_K [R/(I_Z + I_P^m)]_b = \dim_K [(I_Z + I_P^{m-1})/(I_Z + I_P^{m})]_b.
\]
Thus, we are done once we have shown
\begin{align}
   \label{eq:ideal equal}
   [I_Z + I_P^{m-1}]_b = [I_Z + I_P^{m}]_b. 
\end{align}
Let $\ell \in R$ be any linear form that does not vanish at $P$. Then $(x_0,\ldots,x_n) = (\ell, I_P)$. Since $I_P^{m-1}$ is generated by polynomials of degree $m-1$, it follows that  Equality~\eqref{eq:ideal equal} is true if and only if 
\begin{align}
   \label{eq:key inclusion}
\ell^{b-m+1} \cdot [I_P^{m-1}]_{m-1} \subset I_Z + I_P^m. 
\end{align}
Observe that, for each $i \in [t] = \{1,2,\ldots,t\}$, the scheme $W_i := Res_{g_i f_i} (Z + m P))$ is defined by $I_{Z + m P} : (g_i f_i)$ and has multiplicity $m-k$ at $P$ because $f_i (P) \neq 0$ and $g_i$ vanishes precisely to order $k$ at $P$ by assumption. Denote by $J_i$ the homogeneous ideal of $W_i - (m-k)P$. Thus, $I_{W_i} = J_i \cap I_P^{m-k}$.  Hence,  Lemma \ref{lem:CTV} gives
\[
r(W_i) = \max\{m-k-1, \; r(W_i - (m-k)P), \; 1 + \reg (R/(J_i + I_P^{m-k}))\}. 
\]
Since  $r(W_i) \le b - k- d_i$ by assumption, where $d_i = \deg f_i$, we get as above, for each $i \in [t]$,  
\[
0 = \dim_K [R/(J_i + I_P^{m-k})]_{b-k-d_i}  = \sum_{j=0}^{m-k-1} \dim_K [(J_i + I_P^j)/(J_i + I_P^{j+1})]_{b-k-d_i}.
\]
In particular, this yields $[J_i + I_P^{m-k-1}]_{b-k- d_i} = [J_i + I_P^{m-k}]_{b-k- d_i}$. We conclude
\begin{align}
  \label{eq:containment}
  \ell^{b-d_i - m + 1} \cdot [I_P^{m-k-1}]_{m-k-1} \subset J_i + I_P^{m-k}  
\end{align}
because $b - d_i - m +1 \ge 0$. The latter estimate follows from $(m-k)P \subset W_i$, which implies $0 \le m-k-1 = r((m-k)P) \le r(W_i) \le b - k - d_i$ (see Lemma \ref{lem:reg subscheme}). 

Note that, for each $i \in [t]$, one has $J_i = I_Z : (g_i f_i)$. Using $g_i \in I_P^k$ this gives 
\[
g_i f_i \cdot (J_i + I_P^{m-k}) \subset I_Z + I_P^m. 
\]
Combined with  Inclusion \ref{eq:containment}, we get 
\[
g_i f_i \ell^{b-d_i - m + 1}  \cdot  [I_P^{m-k-1}]_{m-k-1} \subset I_Z + I_P^m.   
\]

Since $f(P_i) \neq 0$, possibly after rescaling, we may write $f_i = h_i + \ell^{d_i}$ for some $h_i \in I_P$. Substituting, we obtain, 
\[
g_i (h_i + \ell^{d_i}) \ell^{b-d_i - m + 1}  \cdot  [I_P^{m-k-1}]_{m-k-1} \subset I_Z + I_P^m.   . 
\]
Now $g_i h_i \in I_P^{k+1}$ yields 
\[
\ell^{b - m + 1} g_i  \in I_Z + I_P^m \quad \text{ for each } i \in [t]. 
\]
Since $\{g_1,\ldots,g_t\}$ is a $K$-basis of   $[I_P^{k}]_{k}$, this establishes the desired Containment \eqref{eq:key inclusion}.  
\end{proof}


\section{Reduced Zero-dimensional Subschemes}

We now establish the Segre bound for an arbitrary finite sets of  points. To this end we use suitable vector matroids. 

Recall that a vector matroid or representable matroid $M$ over a field $K$ is given by  an $m \times n$ matrix $A$ with entries in $K$. Its ground set  $E$ is formed by the column vectors of $A$, and the rank of a subset of $E$ is the dimension of the subspace of $K^n$ they generate. Here we adapt this idea in order to use it in a projective space instead of an affine space. 

\begin{define} 
     \label{def:vector matroid}
(i) For a point $P$ of $\PP^n$ and an integer $m \ge 1$, denote by $[P]^m$ an $(n+1) \times m$ matrix whose $m$ columns are all equal to a vector $v \in K^{n+1}$, where $v$ is any representative of the point $P$. 

(ii) Let $X = \sum_{i=1}^s m_iP_i \subset \PP^n$ be a fat point scheme. We write $A_X := \oplus_{i=0}^s [P_i]^{m_i}$ for the concatenation of the matrices $[P_i]^{m_i}$. Define the \emph{matroid of $X$} on the column set $E_X$ of $A_X$, denoted $M_X$, as the vector matroid to the  matrix $A_X$. Thus $|V_X| = \sum_{i=1}^s m_i$. 
\end{define}

\begin{remark} 
     \label{rem:coordVector, span}
(i) Since we are only interested in the span of a subset of columns, the above definition does not depend on the choice of coordinate vectors for the points. Abusing notation slightly, we will identify a non-zero vector of $K^{n+1}$ with a point in $\PP^n$. 

(ii) For consistency of notation,  $\rk$ will always refer to rank in the matroid sense, that is, to a dimension of a subspace of $K^{n+1}$,  and
  $\dim$ will always refer to dimension in $\bb{P}^n$.  Hence, if $S$ is a subset of the column set  $E_X$, then  $\rk (S) = 1 + \dim_{\PP^n} S$. Furthermore, we
  will use $\cl$ to refer to the closure operator in a matroid and
  $\spn$ to refer to the span of the points in $\bb{P}^n$.
  
\end{remark}

Recall that the Segre bound of $X = \sum_{i=1}^s m_iP_i$ is 
\[
\seg (X) = \max \left  \{\left\lceil\frac{w_L(X) - 1}{\dim L} \right\rceil \; | \;  L \subseteq \PP^n \text{ a positive-dimensional linear subspace} \right \}, 
\]
where $w_L (X) = \sum_{P_i \in L} m_i$ is the \emph{weight} of $L$. 

\begin{remark}
  \label{rem:equiv fraction} 
In the literature the Segre bound has also been defined as   
\[
\seg (X) = \max \left \{\left\lfloor\frac{w_L(X) + \dim L - 2}{\dim L} \right\rfloor \; | \;  L \subseteq \PP^n \text{ a positive-dimensional linear subspace} \right \}.  
\]
Obviously, this is equivalent to our definition above. 
\end{remark}

\begin{lemma}
    \label{lem:Segre bounds multi} 
If      $X = \sum_{i=1}^{s} m_iP_i$ is a fat point scheme whose support consists of at least two distinct points, then $m_i \leq \seg (X)$ for all $i$ and $\seg (X) \ge m_i + m_j - 1$ whenever $i \neq j$. 
\end{lemma}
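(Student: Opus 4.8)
The plan is to unwind the definition of $\seg(X)$ by choosing convenient linear subspaces $L$. Since the support of $X$ has at least two points, for any index $i$ we can pick a second point $P_j$ with $j \neq i$ and let $L = \spn\{P_i, P_j\}$, a line, so $\dim L = 1$. Then $w_L(X) = \sum_{P_k \in L} m_k \ge m_i + m_j$, and the corresponding term in the maximum defining $\seg(X)$ is $\lceil (w_L(X) - 1)/1 \rceil = w_L(X) - 1 \ge m_i + m_j - 1$. This immediately gives $\seg(X) \ge m_i + m_j - 1$ for all $i \neq j$.

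For the bound $m_i \le \seg(X)$, I would again use this line $L = \spn\{P_i, P_j\}$: since $m_j \ge 1$ (multiplicities are positive), we get $\seg(X) \ge m_i + m_j - 1 \ge m_i + 1 - 1 = m_i$. So both statements follow from the single observation about lines through two support points.

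The one point requiring a little care is the edge case $s = 1$ versus $s \ge 2$: the hypothesis that the support has at least two distinct points is exactly what guarantees such a point $P_j$ exists and that the line $L$ is genuinely a positive-dimensional (indeed $1$-dimensional) linear subspace, so it is a legitimate candidate in the maximum. There is no real obstacle here; the argument is a direct substitution into the definition, and I would write it in two or three sentences: fix $i \neq j$, take the line $L$ through $P_i$ and $P_j$, observe $\dim L = 1$ and $w_L(X) \ge m_i + m_j$, and read off $\seg(X) \ge w_L(X) - 1 \ge m_i + m_j - 1 \ge m_i$ using $m_j \ge 1$.
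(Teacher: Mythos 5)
Your proposal is correct and is essentially the same argument as the paper's: take the line $L$ through $P_i$ and $P_j$, note $w_L(X) \ge m_i + m_j$ with $\dim L = 1$, and read off $\seg(X) \ge m_i + m_j - 1$, with $m_i \le \seg(X)$ following since $m_j \ge 1$. Your write-up is in fact slightly more explicit than the paper's about the deduction of the first inequality from the second.
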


\begin{proof}
Let $L$ be a line passing through two distinct points $P_i$ and $P_j$ in the support of $X$. Then $w_L (X) \ge m_i + m_j$, which implies $\seg (X) \ge m_i + m_j - 1$. 
\end{proof} 

\begin{remark}
  \label{rem:Segre single point} 
If $X = m_1 P_1$ is supported at a single point, then $r(X) = \seg X = m_1 - 1$.   
\end{remark} 

The following is the main result of this section. 

\begin{theorem}
    \label{thm:add one point}
Let $Z \subset \PP^n$ be a fat point scheme satisfying $r (Z) \le \seg (Z)$. Then, for every point $P \in \PP^n$ that is not in the support of $Z$,  one has $r(Z + P) \le \seg (Z+P)$. 
\end{theorem}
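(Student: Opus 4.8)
## Proof Proposal

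The plan is to induct on the multiplicity structure of $Z+P$, using the matroid partition theorem (Theorem~\ref{thm:main matroid}) to produce a family of hyperplanes through which to run the residual sequences of Inductive Technique~1 (Lemma~\ref{lem:indTech 1}) and Inductive Technique~2 (Lemma~\ref{lem:indTech 2}). Write $Z = \sum_{i=1}^s m_i P_i$ and set $b = \seg(Z+P)$. The goal is $r(Z+P)\le b$. If the support of $Z$ is empty the claim is Remark~\ref{rem:Segre single point}, so assume $s\ge 1$. The key geometric invariant is $b$, which by definition dominates $\lceil (w_L(Z+P)-1)/\dim L\rceil$ for every positive-dimensional linear subspace $L$; in particular $b\ge m_i$ for all $i$ and $b \ge m_i + m_j - 1$ (Lemma~\ref{lem:Segre bounds multi}), and $b\ge r(Z)$ by monotonicity of the Segre bound under subschemes.

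\textbf{Step 1: Reduce to adding $P$ with multiplicity one on top of $Z$.} Since $P$ has multiplicity one in $Z+P$, the residual of $Z+P$ with respect to any hyperplane $H$ not through $P$ equals $Z' := Z$ restricted off $H$ (more precisely $\mathrm{Res}_H(Z+P)$ agrees with $\mathrm{Res}_H Z$ away from multiplicities, and $P\notin H$ forces $P$ to drop out of the residual), while $(Z+P)\cap H = Z\cap H$. By Lemma~\ref{lem:indTech 1}, $r(Z+P)\le \max\{r(\mathrm{Res}_H(Z+P)) + 1,\; r((Z+P)\cap H)\}$. The idea is to choose $H$ cleverly: a hyperplane through $P$ would not eliminate $P$, so instead one wants $H$ \emph{not} through $P$ but through a carefully chosen set of the $P_i$ so that both the residual and the trace have controlled Segre bounds strictly smaller in the relevant sense, allowing induction on $\sum m_i$ or on $s$.

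\textbf{Step 2: Use the matroid partition to build the hyperplanes.} Form the matroid $M_{Z+P}$ of Definition~\ref{def:vector matroid} on the multiset of columns $E$ of $A_{Z+P}$. The Segre inequality $b\ge \lceil (w_L(Z+P)-1)/\dim L\rceil$ translates into $|A|\le (b+1)\cdot \rk(A) - 1$ for subsets $A$ of the columns spanning a positive-dimensional space — this is precisely the hypothesis of Theorem~\ref{thm:main matroid} with $k = b$ (roughly) and with $p$ reflecting how much slack we have. Apply Theorem~\ref{thm:main matroid}, feeding in the column corresponding to $P$ as one of the distinguished elements $a_j$, to obtain a partition $E = I_1\sqcup\cdots\sqcup I_{b}$ into independent sets such that $P\notin \cl(I_j)$ for the relevant $j$'s. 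Each $I_j$, being independent, spans a subspace of $\PP^n$ of dimension $|I_j|-1$; extending to a hyperplane $H_j$ not containing $P$ (possible since $P\notin\cl(I_j)$) gives a hyperplane meeting $Z$ in a controlled way: $H_j$ contains at most as many "column copies" of each $P_i$ as $I_j$ does, so the trace $Z\cap H_j$ and the residual $\mathrm{Res}_{H_j}(Z+P)$ have weights that are provably compatible with a Segre bound of $b$ (trace) and $b-1$ (residual, after subtracting the degree of $H_j$).

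\textbf{Step 3: Assemble the bound and handle higher multiplicities.} For the reduced or low-multiplicity case, Step 2 directly gives $r(Z+P)\le b$ via Lemma~\ref{lem:indTech 1}. When some $m_i$ is large, one additionally invokes Inductive Technique~2 (Lemma~\ref{lem:indTech 2}) to peel multiplicity off $P_i$ one unit at a time: one needs the generators $g_1,\dots,g_t$ of $I_{P_i}^k$ together with auxiliary forms $f_i$ — here the $f_i$ are products of the linear forms defining the hyperplanes $H_j$ produced by the matroid partition, chosen so $f_i(P)\ne 0$ — and then the residual bounds $r(\mathrm{Res}_{g_if_i}(Z+P))\le b - k - \deg f_i$ follow by an inner induction using the same Segre-inequality bookkeeping. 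Combined with $r(Z + (m_i-1)P_i\text{-adjusted}) \le b$ from the outer induction, Lemma~\ref{lem:indTech 2} closes the loop. \textbf{The main obstacle} I anticipate is Step 2: verifying that the matroid $M_{Z+P}$ genuinely satisfies the numerical hypothesis $|A|\le (b+1)\rk(A) - (p+1)$ for the right value of $p$ — the Segre bound controls subspaces $L$, i.e. \emph{closed} subsets of the matroid, whereas Theorem~\ref{thm:main matroid} demands the inequality for \emph{all} subsets $A$; reconciling these (passing from $A$ to its closure, and tracking ceilings versus exact fractions) is the delicate point, as is ensuring the hyperplanes $H_j$ can be chosen simultaneously avoiding $P$ while their traces on $Z$ do not overcount multiplicities.
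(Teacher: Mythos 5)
Your proposal has the right general flavor (vector matroid, partition into independent sets, hyperplanes avoiding $P$, Inductive Technique 1), but it misses the one idea that makes the argument work, and it contains a sign error about residuals that derails Step 1. The error first: if $P\notin H$, the residual $\mathrm{Res}_H(Z+P)$, defined by $I_{Z+P}:\ell$, \emph{retains} $P$ with full multiplicity --- saturating by a form lowers multiplicity only at points where the form vanishes. So a hyperplane missing $P$ does not make $P$ ``drop out of the residual''; it makes $P$ drop out of the \emph{trace}. The paper exploits exactly this: it builds a degree-$B$ hypersurface $F=H_1+\cdots+H_B$, with $B=\seg(Z+P)$, that contains all of $Z$ with full multiplicities but misses $P$, so that $\mathrm{Res}_F(Z+P)=P$ and $(Z+P)\cap F=Z$, and Lemma~\ref{lem:indTech 1} gives $r(Z+P)\le\max\{0+B,\ r(Z)\}=B$ in one stroke --- no induction on $n$, no trace inside a hyperplane $\PP^{n-1}$, and no use of Inductive Technique~2, which is only needed later (Proposition~\ref{prop:general}) when $P$ carries multiplicity $m\ge 2$. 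Your Step 3 is therefore solving a problem that does not arise in this theorem.

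The missing idea is how to get $B$ hyperplanes, \emph{each} avoiding $P$, whose product contains $Z$. The paper forms the matroid of the matrix $A_Z\oplus[P]^B$, i.e.\ it gives $P$ multiplicity $B$ in the auxiliary matroid. The Segre inequality then yields $|S|\le B\cdot\rk(S)$ for every subset of columns, so plain Edmonds (Corollary~\ref{cor:edmonds}) partitions the columns into $B$ independent sets; since no independent set contains two copies of the same vector and there are exactly $B$ copies of $P$ and $B$ parts, every part contains exactly one copy of $P$, whence $I_j\setminus\{P\}$ lies in a hyperplane $H_j$ missing $P$. Your version, with a single column for $P$ and Theorem~\ref{thm:main matroid} marking $P$ as a distinguished element, guarantees $P\notin\cl(I_j)$ for at most $p$ of the parts (and the part actually containing the column of $P$ certainly has $P$ in its closure), so the product of the good hyperplanes has degree too small to contain all of $Z$; moreover the hypothesis $|A|\le k\cdot\rk A-p$ with $p\ge1$ is not implied by the Segre bound in this setting. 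The ``closed subspaces versus arbitrary subsets'' obstacle you flag at the end is real but routine ($|S|\le|\cl(S)|=w_{\spn(S)}$, then clear the ceiling); the genuinely load-bearing step you are missing is the multiplicity-$B$ copy of $P$.
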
   

\begin{proof}
We want to use inductive technique 1. To this end, consider the matrix 
\[
A = A_Z \oplus [P]^B = \oplus_{i=1}^s [P_i]^{m_i} \oplus [P]^B, 
\]
where $B = \seg (Z + P)$ and $Z = \sum_{i=1}^s m_i P_i$. Let $M$ be the  vector matroid  on the column set $V$ of $A$. Set $X = Z + P$. 

Consider any subset $S$ of $V$. If $P \notin \spn (S)$, then the definition of weight gives 
\[
|\cl(S)| = w_{\spn(S)}(Z) = 
w_{\spn(S)}(X).
\]
If $P \in \spn (S)$, then $w_{\spn(S)}(X) = 1 + w_{\spn(S)}(Z)$, and thus
\[
|\cl(S)| =
w_{\spn(S)}(X) + B - 1.
\]
In either case we have
\[
|S| \leq w_{\spn(S)}(X) + B-1. 
\]
Using $\rk (S) = 1 + \dim_{\PP^n} S$, the definition of $B = \seg (X)$ yields, for any subset $S \subset V$ with $\rk (S) \ge 2$,
\[
\frac{|S| - B}{\rk(S)-1} \leq \frac{w_{\spn(S)}(X)-1}{\dim
  (\spn(S))} \leq \seg(X) = B.
\]
It follows that 
\[
|S| \leq \rk(S) \cdot B. 
\]
This estimate is also true if $\rk (S) \le 1$ as $B \ge m_i$ for all $i$ (see Lemma \ref{lem:Segre bounds multi}). Therefore Corollary~\ref{cor:edmonds} gives that there is a partition of the column set $V$ into $B$ linearly independent subsets $I_1,\ldots,I_B$. Note that $P \in I_j$ for each $j \in \{1,2,\ldots,B\}$ as $B$ columns of the matrix $A$ correspond to the point $P$. Thus, for each such  $j$,  there is a  hyperplane $H_j$ such that 
\[
\spn (I_j \setminus \{P\}) \subset H_j \quad \text{ and } \quad P \notin H_j. 
\]
It follows that the hypersurface $F = H_1 + \cdots + H_B$ does not contain $P$. However, $F$ does contain $Z$ because any form defining $F$ vanishes at each point $P_j$ to order at least $m_j$ 
as $m_j$ columns of $A$ correspond to $P_j$. Hence we get $Res_F (X) = P$ and $X \cap F = Z$. Now Lemma~\ref{lem:indTech 1} gives $r (X) \le \max \{B, r (Z)\} = B$, as desired. \end{proof}

\begin{corollary} 
If $X$ is any reduced zero-dimension subscheme of  $\bb{P}^n$, then
  $r(X) \le \seg(X)$. 
  \end{corollary}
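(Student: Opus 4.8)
The plan is a straightforward induction on the number of points, with all the real work already done in Theorem~\ref{thm:add one point}. Since $X$ is reduced and zero-dimensional, we may write $X = P_1 + \cdots + P_s$ for pairwise distinct points $P_1,\ldots,P_s \in \PP^n$, each occurring with multiplicity one; we induct on $s$.

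For the base case $s = 1$, I would simply invoke Remark~\ref{rem:Segre single point} with $m_1 = 1$, which gives $r(P_1) = \seg(P_1) = 0$, so in particular $r(X) \le \seg(X)$.

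For the inductive step, suppose $s \ge 2$ and that the claim holds for every reduced zero-dimensional subscheme supported at fewer than $s$ points. Set $Z = \sum_{i=1}^{s-1} P_i$ and $P = P_s$, so that $Z$ is itself a reduced fat point scheme and $X = Z + P$. Since the $P_i$ are pairwise distinct, $P$ does not lie in the support of $Z$, and by the induction hypothesis $r(Z) \le \seg(Z)$. Thus the hypotheses of Theorem~\ref{thm:add one point} are met, and it yields $r(X) = r(Z + P) \le \seg(Z + P) = \seg(X)$, completing the induction.

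I do not expect any genuine obstacle in the argument for the corollary itself: the only points to check are the trivial base case and the fact that appending a new, distinct point automatically keeps us in the situation $P \notin \supp Z$ required by Theorem~\ref{thm:add one point}. The substance of the result lies in that theorem — and, through it, in the matroid partition Corollary~\ref{cor:edmonds} applied to the vector matroid of $Z + P$ and in the residual-sequence argument of Lemma~\ref{lem:indTech 1} — rather than in the inductive bookkeeping performed here.
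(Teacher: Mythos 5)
Your proof is correct and follows exactly the paper's argument: induction on the number of points, with the base case handled by Remark~\ref{rem:Segre single point} and the inductive step by Theorem~\ref{thm:add one point}. You have merely written out the bookkeeping that the paper leaves implicit.
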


\begin{proof} 
This is true if $X$ consists of one point (see Remark \ref{rem:Segre single point}). Thus, we conclude by induction on the cardinality of $X$  using the above theorem.
\end{proof}

We conclude this section with an example as promised in Remark~\ref{rem:matroid partition}(ii). 

\begin{example}
   \label{exa:partion optimal} 
Consider any integers $k > p > 0$, and let $K$ be an infinite field. Let $L_1,\ldots,L_t \subset K^{t-1}$ be $t$ generic one-dimensional subspaces, where $t \ge \frac{k}{p} + 1$. On each of the lines choose generically $k-p$ points. Let $M$ be the vector matroid on the set $E$ of all these vectors. Then, one has for each non-empty subset $A \subset E$ that $|A| \le k \cdot \rk A - p$. 
Indeed, if $A = E$ this follows because $|E| = t (k - p) \le k \cdot \rk E - p = k  \cdot (t-1) - p$ by the assumption on $t$. If the rank of $A$ is at most $t-2$, then it contains at most $\rk A$ of the lines $L_1,\ldots,L_t$, which implies $|A| \le \rk A \cdot (k -p) \le k \cdot \rk A - p$, as desired. 

Assume now there is an independent $I \subset E$ with at most $t-2$ elements such that for each non-empty subset $B \subset E \setminus I$ one has $|B| \le (k-1) \cdot \rk B - p$. Thus, $|B| \le k-1-p$ if $B$ has rank one. Consider now $B = E \setminus I$. By assumption on $I$, we have $|B| \ge t (k-p) - (t-2) = t (k-p-1) + 2$. However, we also obtain $|B| = \sum_{i=1}^t |B \cap L_i| \le t (k-p-1)$. 
This contradiction shows that $M$ is a matroid as desired in Remark~\ref{rem:matroid partition}(ii). 
\end{example}


\section{Arbitrary Fat Point Schemes}

The goal of this section is to establish the conjecture by Trung, Fattabi, and Lorenzini. We also discuss the sharpness of the Segre bound and establish an alternate regularity estimate. 

We need one more preparatory result on the matroid introduced in Definition \ref{def:vector matroid}. 

\begin{lemma} 
     \label{lem:card estimate}
Consider the vector matroid $M$ to a fat point scheme $Z = \sum_{j=1}^s m_j P_j$ on the column set $E_Z$. Then, for every subset $S \subset E_Z$ with $\rk S \ge 2$, one has 
\[
|S|  \leq \seg (Z) \cdot \{ \rk(S) - 1 \}+1.
\]
\end{lemma}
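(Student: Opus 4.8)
The plan is to unwind the definition of the Segre bound directly. Let $S \subseteq E_Z$ with $\rk S \ge 2$, and set $L = \spn(S) \subseteq \PP^n$, so that $\dim L = \rk(S) - 1 \ge 1$; in particular $L$ is a positive-dimensional linear subspace. Every column of $A_Z$ lying in $S$ corresponds to one of the points $P_j$ with that point lying in $L$ (since $S \subseteq \cl(S)$ and $\cl(S)$ consists exactly of the columns of points in $L$), and the number of columns attached to $P_j$ is $m_j$. Hence $|S| \le |\cl(S)| = \sum_{P_j \in L} m_j = w_L(Z)$.

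From the definition of $\seg(Z)$ applied to the subspace $L$, we have $\seg(Z) \ge \left\lceil \dfrac{w_L(Z) - 1}{\dim L} \right\rceil \ge \dfrac{w_L(Z) - 1}{\dim L}$, which rearranges to $w_L(Z) \le \seg(Z) \cdot \dim L + 1 = \seg(Z) \cdot (\rk(S) - 1) + 1$. Combining this with $|S| \le w_L(Z)$ gives the claimed inequality. So the argument is essentially a two-line chain: $|S| \le w_L(Z) \le \seg(Z)(\rk(S)-1) + 1$.

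I do not anticipate a genuine obstacle here — the statement is a bookkeeping consequence of the matroid-of-$X$ construction and the definition of $\seg$. The only point requiring a word of care is the first inequality $|S| \le w_L(Z)$: one should note that distinct columns of $A_Z$ may correspond to the same point, so $|S|$ could in principle be smaller than, equal to, or (naively) seem larger than the count of points in $L$, but since the total multiplicity of columns attached to points of $L$ is exactly $w_L(Z)$ and $S$ is a subset of those columns, $|S| \le w_L(Z)$ holds. The hypothesis $\rk S \ge 2$ is exactly what guarantees $\dim L \ge 1$ so that $L$ is an admissible subspace in the maximum defining $\seg(Z)$ and the division by $\dim L$ is legitimate.
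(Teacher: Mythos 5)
Your argument is correct and is essentially identical to the paper's own proof: both use $|S| \le |\cl_M(S)| = w_L(Z)$ for $L = \spn(S)$ and then apply the defining inequality of $\seg(Z)$ to the positive-dimensional subspace $L$, using $\rk(S) = \dim L + 1$. No gaps.
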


\begin{proof} Recall that $\rk (S) = \dim (\spn(S))+1$ for any subset $S \subset E_Z$.  Moreover, one has 
   $|S| \le |\cl_{M} (S)| = w_{L}(Z )$, where $L = \spn(S)$.  Hence, if $\rk S \ge 2$  we obtain
\[
\frac{|S|-1}{\rk(S)-1} \leq  \frac{w_L(Z)-1}{\dim L}  \le \seg (Z). 
\]
Now the claim follows.
\end{proof}

The following result allows us to use induction on the cardinality of the support of a fat point scheme. 

\begin{proposition} 
    \label{prop:general}  
Let $Z \subset \PP^n$ be a fat point scheme satisfying $r (Z) \le \seg (Z)$. Then, for every point $P \in \PP^n$ that is not in the support of $Z$ and every integer $m \ge 1$,  one has $r(Z + m P) \le \seg (Z+m P)$. 
\end{proposition}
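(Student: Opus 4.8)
The plan is to prove this by induction on $m$, using Inductive Technique~2 (Lemma~\ref{lem:indTech 2}) for the inductive step, with the base case $m=1$ being exactly Theorem~\ref{thm:add one point}. So assume $m \ge 2$ and that $r(Z + (m-1)P) \le \seg(Z + (m-1)P) \le \seg(Z + mP) =: b$ (the middle inequality because enlarging a multiplicity can only increase the Segre bound). To apply Lemma~\ref{lem:indTech 2} we must, for some $k$ with $1 \le k \le m-1$, produce a generating set $g_1,\ldots,g_t$ of $[I_P^k]_k$ (so $t = \binom{n-1+k}{k}$) together with forms $f_1,\ldots,f_t$ with $f_i(P) \neq 0$ such that
\[
r\big(Res_{g_i f_i}(Z + mP)\big) \le b - k - \deg f_i
\]
for each $i$. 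The residual scheme here has multiplicity $m-k$ at $P$, and away from $P$ it is the residual of $Z$ with respect to the hypersurface $\{g_i f_i = 0\}$; by the known Segre bound for $Z$ (the hypothesis $r(Z) \le \seg Z$) together with Lemma~\ref{lem:indTech 1} applied iteratively, one wants the hyperplane-factors of $g_i f_i$ to be chosen so that each point $P_j$ of the support of $Z$ is contained in enough of these hyperplanes. The natural choice is $k=1$, forcing $t = n$ and each $g_i$ a linear form vanishing at $P$; then each $g_i f_i$ should be a product of $b-1-\deg f_i + \ldots$ hyperplanes, arranged so that the residual of $Z+mP$ with respect to it drops the multiplicity at $P$ by one and has regularity index at most $b-1-\deg f_i$.

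The mechanism producing these hyperplane arrangements is the matroid partition result, Theorem~\ref{thm:main matroid}, applied to the vector matroid of $X = Z + mP$. Concretely, one considers the matrix $A = A_Z \oplus [P]^{?}$ with a suitable number of copies of $P$ chosen so that the counting hypothesis $|S| \le k\cdot\rk_{\tilde M}(S) - p$ of Theorem~\ref{thm:main matroid} holds on the appropriate sub-ground-set — this is where Lemma~\ref{lem:card estimate} and Lemma~\ref{lem:Segre bounds multi} do the arithmetic, exactly as in the proof of Theorem~\ref{thm:add one point} but now tracking the parameter $p$ that records how many of the partition blocks must avoid the span of a prescribed element (namely $P$). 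The tuple $(a_1,\ldots,a_p)$ fed into Theorem~\ref{thm:main matroid} should all be taken equal to $P$ (or to columns representing $P$), so that $p$ of the independent blocks $I_j$ span hyperplanes missing $P$; the product of the forms defining those hyperplanes, grouped appropriately, yields the $g_i f_i$. The "first $q$ entries fixed" refinement in Theorem~\ref{thm:main matroid} is what lets the same partial partition serve simultaneously for all the different residuations $i = 1,\ldots,t$ — the blocks shared across all the $f_i$ are fixed once and for all, and only the remaining blocks vary with $i$.

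The main obstacle I expect is bookkeeping the degrees: one must choose the copies of $P$ in the matrix $A$, the integer $p$, and the distribution of the $t = n$ linear factors $g_i$ among the partition blocks so that each $g_i f_i$ has exactly the degree making $b - k - \deg f_i = b - 1 - \deg f_i$ an achievable regularity bound for the residual, while simultaneously (a) the residual at $P$ has the right multiplicity $m-k = m-1$, so that $r\big((m-1)P\big) = m-2 \le b-1-\deg f_i$ holds, and (b) the counting inequality needed to invoke Theorem~\ref{thm:main matroid} is actually valid for every non-empty subset — this last point is delicate because the inequality in Theorem~\ref{thm:main matroid} involves subtracting $p$, and for subsets $S$ of small rank (rank $1$, i.e. $S$ contained in a single point's fiber, or rank $2$) one has to check it by hand using $\seg(X) \ge m_i + m_j - 1$ and $\seg(X) \ge m - 1 + \max_j m_j$-type bounds. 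Once the parameters are pinned down, the deduction of the regularity estimate for each residual is a routine iteration of Lemma~\ref{lem:indTech 1}, and then Lemma~\ref{lem:indTech 2} closes the induction.
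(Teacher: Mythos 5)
Your skeleton (induction on $m$, base case Theorem~\ref{thm:add one point}, closing the inductive step with Lemma~\ref{lem:indTech 2}, and using the matroid partition machinery on a vector matroid built from $Z$ and $P$ to manufacture the hypersurfaces) matches the paper. But the pivotal choice is different and, as stated, your version does not close. The paper takes $k=m-1$ in Lemma~\ref{lem:indTech 2}, not $k=1$: it produces $t=\binom{n+m-2}{n-1}$ generators $g_j$ of $I_P^{m-1}$ and companions $f_j$ of degree $\sigma-m+1$ with $f_j(P)\neq 0$ and $g_jf_j\in I_{Z+(m-1)P}$, so that each residual $Res_{g_jf_j}(Z+mP)$ is the single reduced point $P$ and the required bound $r=0\le \sigma-(m-1)-\deg f_j$ is immediate. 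With your choice $k=1$, the residual retains $P$ with multiplicity $m-1$ together with a residual of $Z$; since $(m-1)P$ sits inside it, you need $b-1-\deg f_i\ge m-2$, i.e.\ $\deg(g_if_i)\le \sigma-m+2$, and a form of that degree generally cannot contain $Z$ (interpolating $Z$ typically costs degree about $\sigma$). So the leftover piece of $Z$ in the residual cannot be bounded by $m-2$, and no amount of degree bookkeeping fixes this; the difficulty is structural, not arithmetic.

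The second missing ingredient is the construction of the $g_jf_j$ themselves, which is the heart of the paper's argument. You propose feeding the tuple $(P,\ldots,P)$ into Theorem~\ref{thm:main matroid} to get blocks whose spans miss $P$; but such blocks yield forms \emph{not} vanishing at $P$, which can only serve as the $f_j$, never as the generators $g_j$ of $I_P^{m-1}$. The paper instead works in the quotient matroid $M$ with $\rk_M(S)=\dim\spn(S+P)$ (so that independent blocks span hyperplanes \emph{through} $P$), proves the estimate $|S|\le\sigma\cdot\rk_M(S)-(m-1)$ from Lemma~\ref{lem:card estimate}, and then runs a separate nested induction on $m$: Corollary~\ref{cor:inductive step} peels off one block $I_1$ whose span together with $P$ misses an auxiliary point $Q_1$, giving a linear form $\ell_1\in I_P$; the induction hypothesis applies to the fat point scheme on the complement $J_1$; and repeating with $Q_1,\ldots,Q_n$ chosen so that $\ell_1,\ldots,\ell_n$ are linearly independent yields $I_P=(\ell_1,\ldots,\ell_n)$ and hence a full generating set $\{\ell_i h_j^{(i)}\}$ of $I_P^{m-1}$. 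Neither the auxiliary points $Q_i$ nor this inner induction appear in your plan, and the ``fixed first $q$ entries'' feature of Theorem~\ref{thm:main matroid} is not what is used here (the proof only needs Corollary~\ref{cor:edmonds} and Corollary~\ref{cor:inductive step}). These are genuine gaps, not routine details.
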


\begin{proof} 
We use induction on $m \ge 1$.  If $m=1$, then we are done by Theorem \ref{thm:add one point}. 

Let $m \ge 2$. We want to apply Inductive Technique 2 to $X = Z + m P$, where $Z = \sum_{j=1}^s m_j P_j$. This requires some preparation. Set $\sigma = \seg (X)$ and consider the vector matroid associated to the matrix 
\[
A_Z = \oplus_{i=1}^s [P_i]^{m_i} 
\]
with column set $E_Z$. We may assume that the support $\supp (Z)$ of $Z$ is not contained in a hyperplane of $\PP^n$. Thus, this matroid has rank $n+1$. Define a matroid $M$ on $E_Z$ whose rank function is defined by $\rk_M (S) = \rk (S+P) - 1 = \dim \spn (S + P)$ for any subset $S \subseteq E_Z$. Thus, we get 
\[
\rk_M (S) \ge  \dim \spn (S) = \rk (S) - 1. 
\]
In particular, a subset $I$ of $E_Z$ is independent in $M$ if and only if $I+P$ is a  linearly independent subset of $\PP^n$. 
We now argue that, for every subset $S \neq \emptyset$ of $E_Z$, one has 
\begin{align}
   \label{eq:card est}
|S| \le \sigma \cdot \rk_M (S) - (m-1).   
\end{align}
Indeed, given any subset $S \neq \emptyset$ of $E_Z$, extend $S$ by $m$ copies of $P$ to  a subset $S'$ of $E_X$.  Then one has $\rk S' \ge 2$, and thus by applying Lemma \ref{lem:card estimate} to $S'$  we obtain 
\begin{align*}
|S| + m = &  |S'|  
 \le \sigma \cdot \{ \rk (S') - 1 \} + 1 
  = \sigma \cdot \rk_M (S) + 1, 
\end{align*}
which completes the argument for Estimate \eqref{eq:card est}. 

We are now going to show the following key statement. 
\smallskip

\emph{Claim}: There are $t = \binom{n+m-2}{n-1}$ generators $g_1,\ldots,g_t$ of $I_P^{m-1}$ and degree $\sigma - m+1$ forms $f_1,\ldots,f_t$ with $f_j (P) \neq 0$ such that 
\begin{equation}
    \label{eq:key containment}
    g_j f_j \in I_{Z + (m-1) P} \quad \text{ for } j = 1,\ldots,t. 
\end{equation}
\smallskip

\noindent
To establish this claim, we use induction on $m \ge 1$. Let $m =1$. Then Estimate \eqref{eq:card est} is also true for $S = \emptyset$. Hence   Corollary \ref{cor:edmonds} gives a partition $E_Z = I_1 \sqcup \ldots \sqcup I_{\sigma}$ into independent sets of $M$. Thus, $P$ is not in any $\spn (I_j)$, and so there are $\sigma$ linear forms $\ell_j$ such that $\ell_j (P) \neq 0$ and $I_j \subset H_j$, where $H_j$ is the hyperplane defined by $\ell_j$. It follows that $f = \ell_1 \cdots \ell_{\sigma}$ is in $I_Z$ and $f (P) \neq 0$, as desired. 

Let $m \geq 2$. Choose a point $Q_1 \in \PP^n \setminus \{P\}$. Pass from the vector matroid to the matrix $A_Z \oplus [Q_1]$ to a matroid 
 $\widetilde{M}$ on $E_Z \cup \{Q_1\}$ as for $M$ above. That is, $\rk_{\widetilde{M}} (S) = \rk (S+P) - 1 = \dim \spn (S + P)$ for any subset $S \subseteq E_Z \cup \{Q_1\}$. 
Estimate \eqref{eq:card est} shows that we can apply  Corollary~\ref{cor:inductive step} to  obtain a partition 
\[
E_Z = I_1 \sqcup J_1, 
\]
where $I_1$ is independent in $M$, $Q_1 \notin \spn (I_1 + P)$, and 
\begin{equation}
   \label{eq:may apply IH}
|B| \le (\sigma -1) \cdot \rk_M (B) - (m-2)  
\end{equation} 
for each subset $B \neq \emptyset$ of $J_1$. Let $W_1$ be the fat point scheme determined by $J_1$, that is, $W_1 = \sum_{j=1}^s n_j P_j$, where $n_j$ is the number of column vectors in $J_1$ corresponding to the point $P_j$. Estimate~\eqref{eq:may apply IH} shows that the induction hypothesis applies to $W_1$. Hence, there are $u = \binom{n+m-3}{n-1}$ generators $h_1^{(1)},\ldots,h_u^{(1)}$ of $I_P^{m-2}$ and degree $\sigma - m +1$  forms $q_1^{(1)},\ldots,q_u^{(1)}$ with $q_j^{(1)} (P) \neq 0$ such that $h_j^{(1)} q_j^{(1)} \in I_{W_1 + (m-2) P}$ for each $j$.  

Since $Q_1$ is not in the span of the linearly independent set $I_1 + P$, there is a linear form $\ell_1$ such that $\ell_1 (Q_1) \neq 0$ and $I_1 + P \subset H_1$, where $H_1$ is the hyperplane defined by $\ell_1$. Taking into account that $E_Z = I_1 \sqcup J_1$, it follows that $\ell_1 h_j^{(1)} q_j^{(1)} \in I_{Z + (m-1) P}$ for each $j$. 

Notice that this construction works for any point in $\PP^n \setminus \{P\}$. Repeating it $(n-1)$ more times by choosing alltogether points $Q_1,\ldots,Q_n \in \PP^n \setminus \{P\}$, we obtain linear forms $\ell_1,\ldots,\ell_n \in I_P$ as well as $n$ generating sets $\{h_1^{(i)},\ldots,h_u^{(i)}\}$ of $I_P^{m-2}$, and degree $\sigma - m +1$ forms $q_j^{(i)}$ with $q_j^{(i)} (P) \neq 0$ such that 
\begin{equation}
   \label{eq:contain}
\ell_i h_j^{(i)} q_j^{(i)} \in I_{Z + (m-1) P} \quad \text{ for all } i = 1,\ldots,n, \ j = 1,\ldots,u. 
\end{equation} 
We claim that by choosing the points $Q_2,\ldots,Q_n$ suitably  we can additionally achieve that the linear forms $\ell_1,\ldots,\ell_n$ are linearly independent. We show this recursively. Let $2 \le i \le n$ and assume that points $Q_1,\ldots,Q_{i-1}$ have been found such that the linear forms $\ell_1,\ldots,\ell_{i-1}$ are linearly independent. Let $H_j$ be he hyperplane defined by $\ell_j$.  Since  $\dim (\bigcap_{j=1}^{i-1} H_j ) \ge 1$, there is a point $Q_i$ in $(\bigcap_{j=1}^{i-1} H_j) \setminus \{P\}$. By construction of $H_i$, the point $Q_i$ is not contained in $H_i$. Thus, we get 
\[
\dim \bigcap_{j=1}^{i} H_j  = \dim \bigcap_{j=1}^{i-1} H_j  - 1 = n- (i-1) - 1 = n-i. 
\]
In particular, we have shown that $\dim (\bigcap_{j=1}^{n} H_j ) = 0$. Since each of the hyperplanes $H_j$ contains the point $P$, we conclude that the ideal of this point is 
$I_P = (\ell_1,\ldots,\ell_n)$. Now it follows that $\{\ell_i h_j^{(i)} \; | \; 1 \le i \le n, \ 1 \le j \le u\}$ is a generating set of $I_P \cdot I_P^{m-2} = I_P^{m-1}$. Together with the Containment~\eqref{eq:contain}, this establishes the claim. 
\smallskip 

For the remainder of the argument, adopt the notation of the above claim.  Since each form $g_j$ vanishes  precisely to order $m-1$ at $P$, it follows that $I_{Z + (m-1) P} : f_j g_j = I_P$, and thus 
\[
r (Res_{g_j f_j} (Z + mP))  = r(P) = 0
\]
for each $j$.  Since $Z+ (m-1) P$ is a subscheme of $Z + mP$, the definition of the Segre bound implies $\seg (Z+ (m-1) P) \le \seg (Z + m P) = \sigma$. By the induction hypothesis on $m$, we know $r (Z+ (m-1) P) \le \seg (Z+ (m-1) P)$, and so we get  $r (Z+ (m-1) P) \le \sigma$. Thus, applying Lemma~\ref{lem:indTech 2} we conclude that $r (Z + mP) \le \sigma$, as desired. 
\end{proof}

The regularity bound announced in the introduction follows now easily. 

\begin{theorem} 
     \label{thm:Segre bound}
If $X$ is any fat point subscheme of  $\bb{P}^n$, then
  $r(X) \le \seg(X)$. 
\end{theorem}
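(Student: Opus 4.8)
The plan is to deduce Theorem~\ref{thm:Segre bound} from Proposition~\ref{prop:general} by a double induction: first on the number $s$ of distinct points in the support of $X$, and then, for fixed support, on the total multiplicity $\sum m_i$. Write $X = \sum_{i=1}^s m_i P_i$. The base case is $s = 1$, where $X = m_1 P_1$ is supported at a single point and $r(X) = \seg(X) = m_1 - 1$ by Remark~\ref{rem:Segre single point}; there is nothing to prove.

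For the inductive step, suppose $s \ge 2$ and that the bound $r(Y) \le \seg(Y)$ holds for every fat point scheme $Y$ whose support has at most $s-1$ points. Fix $X = \sum_{i=1}^s m_i P_i$. Set $Z = \sum_{i=1}^{s-1} m_i P_i$, a fat point scheme supported at $s-1$ points, and $P = P_s$, which is not in $\supp(Z)$. By the outer induction hypothesis, $r(Z) \le \seg(Z)$. Now apply Proposition~\ref{prop:general} with this $Z$, this $P$, and $m = m_s$: it yields $r(Z + m_s P) \le \seg(Z + m_s P)$, that is, $r(X) \le \seg(X)$. This closes the induction.

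In fact Proposition~\ref{prop:general} already carries out the inner induction on the multiplicity $m$ internally, so the only work left at the level of Theorem~\ref{thm:Segre bound} is the bookkeeping of the outer induction on $s$ together with the observation that stripping off one point of the support from $X$ leaves a scheme to which the outer hypothesis applies and whose regularity is what is needed as the hypothesis ``$r(Z) \le \seg(Z)$'' in Proposition~\ref{prop:general}. There is no real obstacle here: all the substance — the matroid partition result of Section~2, the residual-scheme techniques of Section~3, and the delicate reduction in the proof of Proposition~\ref{prop:general} — has already been established. The one point worth stating explicitly is that $\seg$ is monotone under passing to subschemes supported on a subset of the points (indeed $\seg(Z) \le \seg(X)$ since every linear subspace $L$ contributes a weight $w_L(Z) \le w_L(X)$), though for this particular induction we only need Proposition~\ref{prop:general} applied verbatim, so even that remark is not strictly required.

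Thus the proof is a two-line corollary of Proposition~\ref{prop:general}: induct on $|\supp(X)|$, peel off the last point $P_s$ with its multiplicity $m_s$, invoke the outer hypothesis on $Z = \sum_{i<s} m_i P_i$ to get $r(Z) \le \seg(Z)$, and apply Proposition~\ref{prop:general}.
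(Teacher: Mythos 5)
Your proposal is correct and is essentially the paper's own argument: the paper also proves the theorem by induction on $|\supp X|$, with the single-point case handled by Remark~\ref{rem:Segre single point} and the inductive step supplied verbatim by Proposition~\ref{prop:general}. You have merely spelled out the bookkeeping in more detail.
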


\begin{proof} 
This is true if $X$ consists of one point (see Remark \ref{rem:Segre single point}). Thus, we conclude by induction on the cardinality of $\supp X$  using the above proposition.
\end{proof}

We conclude by discussing a modification of the above Segre bound. To this end consider the $d$-th Veronese embedding  $v_d:\bb{P}^n \to \bb{P}^{N}$,  where $d \in \N$ and $N = {n+d \choose d} - 1$. We use it to compare the regularity indices of fat point schemes in $\PP^n$ and $\PP^N$, respectively. 

\begin{proposition}
   \label{prop:Veronese}
Let $X = \sum_{i=1}^s m_i P_i$ be a fat point subscheme of $\PP^n$.  Define a fat point subscheme $\hat{X}$ of $\PP^N$ by $\hat{X} = \sum_{i=1}^s m_i  v_d (P_i)$. Then one has 
${\displaystyle 
\left \lceil \frac{r (X)}{d} \right \rceil \le r (\hat{X}). }$

Moreover,  if both $n = 1$ and $d (m_j + m_k) \le 2 d - 2 + \sum_{i=1}^s m_i$ for all integers $j, k$ with $1 \le j < k \le s$, then this is an equality and  $r (\hat{X}) = 
\left \lceil \frac{-1 + \sum_{i=1}^d m_i}{d} \right \rceil$. 
\end{proposition}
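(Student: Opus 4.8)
\textbf{Proof plan for Proposition \ref{prop:Veronese}.}

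The plan is to first establish the general inequality $\lceil r(X)/d \rceil \le r(\hat X)$ for arbitrary $n$, and then to handle the sharpness claim in the case $n=1$ by combining the Segre bound (Theorem \ref{thm:Segre bound}) applied to $\hat X$ in $\PP^N$ with a matching lower bound for $r(\hat X)$.

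For the general inequality, the key observation is that the Veronese embedding $v_d$ identifies degree-$j$ forms on $\PP^N$ with degree-$dj$ forms on $\PP^n$, and that this identification is compatible with vanishing to prescribed order at the points: a form of degree $j$ on $\PP^N$ vanishes to order $m_i$ at $v_d(P_i)$ if and only if its pullback, a degree-$dj$ form on $\PP^n$, vanishes to order $m_i$ at $P_i$. (One should phrase this carefully; the cleanest route is via the Hilbert function identity $h_{\hat X}(j) = h_X(dj)$ together with $\deg \hat X = \deg X$, which follows because the coordinate ring of $v_d(\PP^n)$ is the $d$-th Veronese subring of $R$ and the ideal of $\hat X$ pulls back to the degree-$d$-divisible part of $I_X$.) Given this, $j \ge r(\hat X)$ forces $h_{\hat X}(j) = \deg \hat X$, hence $h_X(dj) = \deg X$, hence $dj \ge r(X)$ by Lemma \ref{lem:reg subscheme}; taking $j = r(\hat X)$ gives $d \cdot r(\hat X) \ge r(X)$, i.e. $r(\hat X) \ge \lceil r(X)/d \rceil$. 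I expect the main obstacle here to be pinning down the pullback statement at the scheme level (vanishing to order $m_i$) rather than just at the level of reduced points; the Hilbert-function formulation sidesteps most of this.

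For the sharpness claim with $n=1$: here $\seg(\hat X)$ is a maximum over positive-dimensional linear subspaces $L \subseteq \PP^N$, and the numerator for such $L$ is $w_L(\hat X) = \sum_{v_d(P_i) \in L} m_i$. The hypothesis $d(m_j + m_k) \le 2d - 2 + \sum_i m_i$ is designed exactly so that the subspace maximizing the Segre ratio is the hyperplane (of dimension $d$) spanned by the rational normal curve $v_d(\PP^1)$ itself, which contains all the points; any smaller subspace containing $\ge 2$ of the points is forced, by the fact that the $v_d(P_i)$ lie on the rational normal curve, to have dimension growing with the number of points it contains, so its ratio is no larger --- this is where one uses that any $d+1$ points on the rational normal curve are linearly independent, so a subspace of dimension $e < d$ contains at most $e$ of the points, and the hypothesis controls the resulting ratio. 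Thus $\seg(\hat X) = \lceil (-1 + \sum_i m_i)/d \rceil$. Combining Theorem \ref{thm:Segre bound} ($r(\hat X) \le \seg(\hat X) = \lceil(-1+\sum m_i)/d\rceil$) with the general inequality just proved and the classical value $r(X) = \seg(X) = \lceil (-1 + \sum m_i - ?)/1\rceil$ in $\PP^1$ --- more precisely, in $\PP^1$ the only positive-dimensional subspace is the whole line, so $\seg(X) = -1 + \sum_i m_i$, and Theorem \ref{thm:Segre bound} gives $r(X) \le -1 + \sum_i m_i$, while a lower bound $r(X) \ge -1 + \sum_i m_i$ is standard (a form of degree $-2 + \sum m_i$ cannot vanish to the prescribed orders unless it is zero) --- we get $r(X) = -1 + \sum_i m_i$ and hence $\lceil r(X)/d\rceil = \lceil(-1+\sum m_i)/d\rceil = \seg(\hat X) \ge r(\hat X) \ge \lceil r(X)/d\rceil$, forcing equality throughout.

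The step I expect to be the genuine crux is verifying $\seg(\hat X) = \lceil(-1+\sum_i m_i)/d\rceil$ under the stated hypothesis: one must rule out, for every linear subspace $L$ of dimension $e$ with $2 \le e \le d$, that $\lceil (w_L(\hat X) - 1)/e \rceil$ exceeds $\lceil(-1+\sum m_i)/d\rceil$. For $e < d$ this uses $w_L(\hat X) \le \sum$ of at most $e$ of the $m_i$, bounded in turn by the pairwise hypothesis; the case of exactly two points in $L$ (forcing $e = 1$, excluded) or the interplay of ceilings needs a short but careful estimate. Everything else is bookkeeping with Hilbert functions and the already-established Segre bound.
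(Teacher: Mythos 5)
Your overall route coincides with the paper's: the first inequality via the Veronese subring identification, and the sharpness claim via linear general position of points on the rational normal curve combined with Theorem \ref{thm:Segre bound} and the elementary computation $r(X)=-1+\sum_i m_i$ on $\PP^1$. The second half of your plan is essentially the paper's argument (modulo the small slip that a linear subspace of dimension $e<d$ contains at most $e+1$, not $e$, of the points of the rational normal curve; the ceiling estimate still goes through).

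The genuine problem is in your justification of $\left\lceil r(X)/d\right\rceil\le r(\hat X)$. The identities you lean on, $h_{\hat X}(j)=h_X(dj)$ and $\deg\hat X=\deg X$, are false as soon as some $m_i\ge 2$: a fat point $m_iP_i$ has degree $\binom{m_i-1+n}{n}$ in $\PP^n$ but $\binom{m_i-1+N}{N}$ in $\PP^N$, so already $\deg\hat X>\deg X$. The underlying reason is that your ``if and only if'' pullback statement fails in the ``if'' direction: a form on $\PP^N$ whose pullback to $\PP^n$ vanishes to order $m_i$ at $P_i$ need not vanish to order $m_i$ at $v_d(P_i)$ in the ambient $\PP^N$ (for instance, the equation of the conic $v_2(\PP^1)\subset\PP^2$ pulls back to $0$ but has multiplicity $1$ at each of its points). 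Consequently the preimage $\ffi^{-1}(I_X)$ under the Veronese inclusion $\ffi\colon S\to R$ cuts out a scheme $W$ that is in general a \emph{proper} subscheme of $\hat X$, and one only has the one-sided containment $I_{\hat X}\subseteq\ffi^{-1}(I_X)$. The paper's proof uses exactly this containment: it checks that $\ffi^{-1}(I_X)$ is saturated, identifies $r(W)=\left\lceil r(X)/d\right\rceil$ via $H^1(\Ic_X(jd))\cong H^1(\Ic_W(j))$, and then invokes monotonicity of the regularity index under passing to subschemes (Lemma \ref{lem:reg subscheme}(b)) to get $r(W)\le r(\hat X)$. Your chain of implications can be repaired the same way --- replace ``$h_{\hat X}(j)=\deg\hat X$, hence $h_X(dj)=\deg X$'' by ``$h_{\hat X}(j)=\deg\hat X$, hence $h_W(j)=\deg W$, hence $h_X(dj)=\deg X$'' --- but as written the central step of the first inequality is incorrect.
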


\begin{proof}
Let  $S = \oplus_{j \in \N_0} [R]_{j d}$ be the $d$-th Veronese subring of $R = K[x_0,...,x_n]$. It is a polynomial ring in variables $y_a$, where $y_a$ corresponds to the monomial $x^a = x_1^{a_1}  \cdots x_n^{a_n}$ of degree $d$. Consider the ring homomorphism $\ffi: S \to R$ that maps $y_a$ onto $x^a$. Observe that, for each point $P \in \PP^n$, one has $\ffi (I_{v_d (P)}) \subset I_P$. If follows that $\ffi (I_{\hat{X}}) \subset I_X$, and so $I_{\hat{X}} \subset \ffi^{-1} (I_X)$. Furthermore, the ideal $\ffi^{-1} (I_X)$ of $S$ is saturated. Indeed, if $f \in S$ is a homogeneous polynomial that multiplies a power, say, the $k$-th power of the ideal generated by all the variables in $S$ into  $\ffi^{-1} (I_X)$, then $\ffi(f) \cdot  (x_0,...,x_n) ^{k d} \subset I_X$. Since $I_X$ is saturated, this implies $f \in \ffi^{-1} (I_X)$, as desired. 

Thus, the ideal $\ffi^{-1} (I_X)$ is the homogenous ideal of a zero-dimensional subscheme $W \subset \PP^N$, and one has 
\[
H^1 (\PP^n, \Ic_X (j)) \cong H^1 (\PP^N, \Ic_W (j d)). 
\]
Hence, Lemma \ref{lem:reg subscheme}(a) implies $r (W) = \left   \lceil \frac{r (X)}{d}  \right \rceil$. Since $W$ is a subscheme of $\hat{X}$, Lemma \ref{lem:reg subscheme}(b) gives $r (W) \le r(\hat{X})$, and now the first assertion follows. 

In order to show the second claim, assume $n = 1$. Thus  $N = d$, and  $\supp \hat{X}$ lies on a rational normal curve of $\PP^d$. It follows that the support of $\hat{X}$ is in linearly general position, that is, any subset of $j+1 \le d+1$ points span a $j$-dimensional linear subspace of $\PP^d$. Therefore, a straightforward computation shows that the Segre bound of $\hat{X}$ is determined by the one-dimensional subspaces and $\PP^d$, that is, 
\[
\seg \hat{X} = \max \left \{m_j + m_k - 1, \left \lceil \frac{-1 + \sum_{i=1}^s m_i}{d} \right \rceil  \; | \; 1 \le j < k \le s \right \}. 
\]
Combining the assumption and Theorem \ref{thm:Segre bound}, we obtain 
\[
r (\hat{X}) \le \seg \hat{X} = \left \lceil \frac{-1 + \sum_{i=1}^s m_i}{d} \right \rceil. 
\]

Since $X$ is a subscheme of $\PP^1$, its homogeneous ideal is a principal ideal of degree $\sum_{i=1}^s m_i$. Thus, $r (X) = - 1 +  \sum_{i=1}^s m_i$. Now the first assertion gives the desired equality. 
\end{proof} 

As a first consequence, we describe instances where the Segre bound in Theorem~\ref{thm:Segre bound} is sharp. The result extends \cite[Proposition 7]{CTV}. 

\begin{corollary}
     \label{cor:sharpness}
Let $X \subset \PP^n$ be a fat point subscheme, and let $L \subset \PP^n$ be a positive-dimensional linear subspace such that $\seg X =  \left \lceil \frac{w_L (X) -1}{\dim L} \right \rceil$. If the points of $\supp X$ that are in $L$ lie on a rational normal curve of $L$, then $r(X) = \seg X$.     
\end{corollary}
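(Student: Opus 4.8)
The plan is to reduce the claimed equality $r(X) = \seg X$ to the already-proven inequality $r(X) \le \seg X$ (Theorem~\ref{thm:Segre bound}) together with a matching lower bound $r(X) \ge \seg X$ coming from the restriction of $X$ to the distinguished subspace $L$. First I would observe that, by Lemma~\ref{lem:reg subscheme}(b), if $Y \subseteq X$ is any zero-dimensional subscheme, then $r(Y) \le r(X)$; thus it suffices to exhibit a subscheme $Y$ of $X$ supported on $L$ with $r(Y) \ge \seg X = \lceil (w_L(X)-1)/\dim L\rceil$. The natural candidate is $Y = \sum_{P_i \in L} m_i P_i$, viewed as a fat point subscheme of the linear space $L \cong \PP^{\dim L}$; note $\deg Y = w_L(X)$.

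The key input is that the points of $\supp Y$ lie on a rational normal curve of $L$, which is exactly the hypothesis of Proposition~\ref{prop:Veronese} (in the reverse direction): a rational normal curve of $\PP^{\dim L}$ is the image of $\PP^1$ under the $(\dim L)$-th Veronese embedding $v_{\dim L}\colon \PP^1 \to \PP^{\dim L}$. So I would write $d = \dim L$, pull $Y$ back along $v_d$ to a fat point scheme $X' = \sum_{P_i \in L} m_i P_i'$ on $\PP^1$ with $v_d(P_i') = P_i$, so that $Y = \hat{X'}$ in the notation of Proposition~\ref{prop:Veronese}. The homogeneous ideal of $X'$ on $\PP^1$ is principal of degree $\sum_{P_i \in L} m_i = w_L(X)$, hence $r(X') = w_L(X) - 1$. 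By the first assertion of Proposition~\ref{prop:Veronese}, $r(Y) = r(\hat{X'}) \ge \lceil r(X')/d \rceil = \lceil (w_L(X)-1)/\dim L \rceil = \seg X$. Combining with $r(Y) \le r(X)$ and Theorem~\ref{thm:Segre bound}, we get $\seg X \le r(Y) \le r(X) \le \seg X$, forcing equality throughout; in particular $r(X) = \seg X$.

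One technical point to handle carefully is the case $\dim L = 1$, i.e. $L$ is a line: then ``lying on a rational normal curve of $L$'' is vacuous, $v_1$ is the identity, and the argument degenerates to the statement that $Y = \sum_{P_i \in L} m_i P_i$ is a fat point scheme on $\PP^1$ with $r(Y) = w_L(X) - 1 = \lceil (w_L(X)-1)/1 \rceil = \seg X$, so the reduction still goes through without invoking Proposition~\ref{prop:Veronese} at all. A second point is to make sure the set $\{P_i \in L\}$ is nonempty with $w_L(X)$ large enough that $\seg X$ is actually realized by $L$ — but this is guaranteed by the hypothesis $\seg X = \lceil (w_L(X)-1)/\dim L\rceil$, which in particular forces $w_L(X) \ge 2$ (otherwise the fraction is $\le 0$), so at least one point of $\supp X$ lies on $L$.

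The main obstacle I anticipate is purely bookkeeping: verifying that the restricted scheme $Y$ is genuinely a subscheme of $X$ as schemes in $\PP^n$ (not merely that its points lie on $L$), so that Lemma~\ref{lem:reg subscheme}(b) applies — this is clear because $I_X = \bigcap_i I_{P_i}^{m_i} \subseteq \bigcap_{P_i \in L} I_{P_i}^{m_i} = I_Y$, whence $Y \subseteq X$ — and checking that the Veronese pullback of $Y$ (computed intrinsically in $L$) matches $\hat{X'}$ in the sense of Proposition~\ref{prop:Veronese}. Once these identifications are in place, the chain of (in)equalities closes immediately and no further computation is needed.
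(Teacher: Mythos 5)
Your proof is correct and follows essentially the same route as the paper: both reduce to the subscheme $Y = \sum_{P_i \in L} m_i P_i$, combine $r(Y) \le r(X) \le \seg X$ with the matching lower bound $r(Y) \ge \left\lceil (w_L(X)-1)/\dim L\right\rceil$, and obtain that lower bound by viewing the rational normal curve as the image of the $(\dim L)$-th Veronese embedding of $\PP^1$ and applying Proposition~\ref{prop:Veronese} (handling $\dim L = 1$ separately, as the paper does). The only difference is that you use just the first (inequality) assertion of that proposition, whereas the paper invokes the second (equality) assertion and must verify its hypothesis on the multiplicities $m_j + m_k$ from the assumption that $L$ realizes the Segre bound; your variant is slightly leaner but not a genuinely different argument.
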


\begin{proof}
Consider the fat point subscheme  $Y = \sum_{P_i \in L} m_i P_i$  of $X$ such that $w_L (X) = w_L (Y)$. If $\dim L = 1$, then $w_L (Y) - 1 = r(Y) \le r(X) \le w_L (X) - 1$, and thus the claim follows. 

Assume $\dim L \ge 2$. Considering lines through any two points in the support of $X$,  the assumption on $L$ gives $m_j + m_k - 1 \le  \left \lceil \frac{w_L (Y) -1}{\dim L} \right \rceil$ for all $j < k$. Hence, applying Proposition \ref{prop:Veronese} with $\hat{X} = Y$, we conclude $r (Y) =  \left \lceil \frac{-1 + \sum_{P_i \in L} m_i}{\dim L} \right \rceil =   \left \lceil \frac{w_L (Y) -1}{\dim L} \right \rceil = \seg X$. Since $r (Y) \le r(X)$, the desired equality follows by Theorem \ref{thm:Segre bound}. 
\end{proof} 

The second consequence of Proposition \ref{prop:Veronese} is an alternate regularity bound. Notice that the following result specializes to Theorem \ref{thm:Segre bound} if $d=1$. 

\begin{proposition}
    \label{prop:modified Segre bound}
Given any scheme of fat points $X = \sum_{i=1}^s
  m_iP_i \subseteq \bb{P}^n$ and any integer $d \geq 1$, the regularity index of $X$  is
  subject to the bound 
\[
r(X) \le \max\left\{d \cdot \left\lceil\frac{-1 + \sum_{P_i \in Y} m_i}{\dim_K [R/I_Y]_d -1} \right \rceil \; | \; \; Y \subseteq \supp X  \text{ and } |Y| \ge 2 \right\}. 
 \]
\end{proposition}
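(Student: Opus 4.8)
The strategy is to deduce this from Proposition~\ref{prop:Veronese} together with Theorem~\ref{thm:Segre bound}, applied to the $d$-th Veronese image $\hat{X} \subseteq \PP^N$. By Proposition~\ref{prop:Veronese}, one has $\lceil r(X)/d \rceil \le r(\hat X)$, hence $r(X) \le d \cdot r(\hat X)$, and by Theorem~\ref{thm:Segre bound}, $r(\hat X) \le \seg(\hat X)$. So it suffices to bound $\seg(\hat X)$ by the quantity on the right-hand side. The key point is to understand which linear subspaces $L \subseteq \PP^N$ can realize the maximum in the definition of $\seg(\hat X)$, and to translate the data attached to such an $L$ back into data on $\supp X \subseteq \PP^n$.

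\textbf{Main steps.} First I would record that for a subset $Y \subseteq \supp X$, the span of $\{v_d(P_i) : P_i \in Y\}$ in $\PP^N$ has projective dimension exactly $\dim_K[R/I_Y]_d - 1$: indeed the linear span of $v_d(Y)$ corresponds to the space of degree-$d$ forms vanishing on $Y$, whose codimension in $[R]_d$ is $\dim_K[R/I_Y]_d$, so the span has vector-space dimension $\dim_K[R/I_Y]_d$ and projective dimension one less. Second, given any positive-dimensional linear subspace $L \subseteq \PP^N$ achieving $\seg(\hat X)$, let $Y = \{P_i \in \supp X : v_d(P_i) \in L\}$; then $w_L(\hat X) = \sum_{P_i \in Y} m_i$ and $\dim L \ge \dim \spn(v_d(Y)) = \dim_K[R/I_Y]_d - 1$, so
\[
\left\lceil \frac{w_L(\hat X) - 1}{\dim L} \right\rceil \le \left\lceil \frac{-1 + \sum_{P_i \in Y} m_i}{\dim_K[R/I_Y]_d - 1} \right\rceil.
\]
Third, one must deal with the degenerate possibility $|Y| \le 1$. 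If $Y = \emptyset$ then $w_L(\hat X) = 0$ and the contribution is $\le 0$, harmless since we may assume $r(X) \ge 0$. If $|Y| = 1$, say $Y = \{P_i\}$, then since $\dim L \ge 1$ we get $\lceil (m_i - 1)/\dim L\rceil \le m_i - 1 = r(m_i P_i) \le r(X)$ by Lemma~\ref{lem:reg subscheme}(b), so such $L$ never force the bound above $r(X)$ and can be excluded; alternatively, enlarge $L$ to contain a second point of $\supp \hat X$. Combining: $\seg(\hat X) \le \max\{\lceil (-1 + \sum_{P_i \in Y} m_i)/(\dim_K[R/I_Y]_d - 1)\rceil : Y \subseteq \supp X, |Y| \ge 2\}$, and multiplying through by $d$ yields the claim.

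\textbf{Expected obstacle.} The only subtlety is the bookkeeping with subspaces $L$ that are not themselves spanned by points of $v_d(\supp X)$: such an $L$ can contain more image points than a generic subspace of its dimension, but the inequality $\dim L \ge \dim\spn(v_d(Y))$ still points the right way, so replacing $L$ by $\spn(v_d(Y))$ only increases the ratio. One must be slightly careful that passing from $L$ to $\spn(v_d(Y))$ does not change $Y$ (it cannot decrease it, and cannot increase it since $Y$ is already all the image points in $L$). I would also double-check the edge case $\dim_K[R/I_Y]_d - 1 = 0$, i.e. $[R/I_Y]_d$ one-dimensional: this means every degree-$d$ form through $Y$ is a scalar multiple of one form, forcing $\dim\spn(v_d(Y)) = 0$, so $v_d(Y)$ is a single point and $|Y| \le 1$ after identifying; this is exactly the degenerate case handled above, so the denominators appearing for $|Y| \ge 2$ are automatically positive and the ceilings are well-defined. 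Beyond this, the argument is a direct chain of inequalities and should present no real difficulty.
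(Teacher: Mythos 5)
Your proof is correct and takes essentially the same route as the paper: pass to the Veronese image, apply Proposition~\ref{prop:Veronese} and Theorem~\ref{thm:Segre bound} to $\hat{X}$, and identify $\dim \spn v_d(Y)$ with $\dim_K [R/I_Y]_d - 1$ to translate $\seg \hat{X}$ back to data on $\supp X$. Your explicit handling of the degenerate cases $|Y| \le 1$ and of maximizing subspaces $L$ not spanned by image points only spells out what the paper leaves implicit.
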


\begin{proof}
Consider the $d$-th Veronese embedding $v_d:\bb{P}^n \to
  \bb{P}^{N}$. As above, let  $R$ and $S$ be the coordinate rings of $\PP^n$ and $\PP^N$, respectively. Notice that the Segre bound of $\hat{X} = \sum_{i=1}^s m_i v_d (P_i)$ is 
\[
\seg \hat{X} = \max \left \{\left \lceil \frac{-1 + \sum_{v_d (P_i) \in L} m_i}{\dim L } \right\rceil \; | \; \; L \subseteq \PP^N   \text{ linear}, \dim L \ge 1 \right\}. 
\]
Consider a linear subspace $L \subset \PP^N$ for which the right-hand side above is maximal. Set $Y = \{P_i \in \supp X \; | \; v_d (P_i) \in L\}$. The assumption on $L$ gives that $\hat{Y} = v_d (Y)$ is not contained in a proper subspace of $L$, that is, $\dim_K [S/I_{\hat{Y}}]_1 -1 = \dim L$. Since $\dim_K [S/I_{\hat{Y}}]_1 = \dim_K [R/I_Y]_d$, Theorem \ref{thm:Segre bound} gives 
\[
r (\hat{X}) \le \seg \hat{X} = \left\lceil\frac{-1 + \sum_{P_i \in Y} m_i}{\dim_K [R/I_Y]_d -1} \right \rceil. 
\]
Using $\frac{r(X)}{d} \le r(\hat{X})$ due to  Proposition \ref{prop:Veronese}, the claim follows. 
\end{proof}

If one has information on subsets of the points supporting a fat point scheme, then the above result can be used to obtain a better regularity bound than the Segre bound of Theorem~\ref{thm:Segre bound}. We illustrate this by a simple example. 

\begin{example}
    \label{exa:mod bound}
Let $X = \sum_{i=1}^s m P_i \subset \PP^n$ be a fat point scheme, where all points have the same multiplicity $m$. Suppose that the support of $X$ consists of five arbitrary points and $\binom{d+n}{n}$ generic points for some $d \ge 5$. Thus, $s = 5 + \binom{d+n}{n}$. Let $L \subset \PP^n$ be a linear subspace of dimension $k$ with $1 \le k < n$. Then $| L \cap \supp X| \le k+4$. It follows that for sufficiently large $d$ (or $n$) 
\[
\seg X = \max\left\{\left \lceil \frac{(k+4) m - 1}{k}\right\rceil, \left \lceil \frac{[\binom{d+n}{n} + 5] m - 1}{n} \right\rceil  \; | \; 1 \le k < n \right \} = \left \lceil \frac{\binom{d+n}{n} m + 5m - 1}{n} \right\rceil. 
\]

Consider now any subset $Y \subset \supp X$ of $t \ge 2$ points. Since $d \ge 5$, one gets 
\[
\dim_K [R/I_Y]_d = \begin{cases} 
t & \text{ if } t \le \binom{n+d}{n} \\
\binom{n+d}{n} & \text{ otherwise}. 
\end{cases}
\]
Hence, Proposition~\ref{prop:modified Segre bound} and a straightforward computation give
\begin{align*}
r (X) & \le d \cdot \max\left\{\left \lceil \frac{t m - 1}{t-1}  \right\rceil, \left \lceil \frac{[\binom{d+n}{n} + 5]  m - 1}{\binom{d+n}{n} - 1} \right\rceil \; | \; 1 \le t \le \binom{d+n}{n} \right \}  \\
& =  d \cdot \max \left\{2 m -1, \left \lceil \frac{[\binom{d+n}{n} + 5] m - 1}{\binom{d+n}{n} - 1} \right\rceil \right \}. 
\end{align*} 
For sufficiently large $d$ (or $n$), this implies $r (X) \le d (2m-1)$. In comparison, $\seg X$ is essentially a polynomial function in $d$ of degree $n$. 
\end{example}


\end{document}